\newtheorem{thm}{Theorem}[section]
\newtheorem{lemma}[thm]{Lemma}
\newtheorem{prop}[thm]{Proposition}
\theoremstyle{definition}
\newtheorem{defn}[thm]{Definition}
\newtheorem{remark}[thm]{Remark}
\newcommand{\N}{\mathbb N}
\newcommand{\HO}{H_0^1(\Omega)}
\newcommand{\Dud}{\mathcal{D}^{1,2}(\mathbb{R}^N)}
\newcommand{\setR}{{\mathbb{R}^N}}
\newcommand{\eps}{\varepsilon}
\newcommand{\di}{\mathop{}\!\mathrm{d}}
\newcommand{\Neps}{\mathcal{N}_\eps}
\newcommand{\Peps}{p_\eps}
\newcommand{\Ds}{2_\mu^*}
\let\epsilon\varepsilon
\DeclareMathOperator{\dist}{dist}
\begin{document}

\title[Multiple positive solutions for a Choquard problem]{Multiple positive solutions for a slightly subcritical Choquard problem on bounded domains}

\author{Marco Ghimenti}
\author{Dayana Pagliardini}

\address{Marco Ghimenti \newline
Dipartimento di Matematica 
Universit\`a di Pisa
Via F. Buonarroti 1/c, 
56127 Pisa, Italy}
\email{marco.ghimenti@unipi.it}

\thanks{The first author is partially supported by PRA Universit\`a di Pisa}

\address{Dayana Pagliardini \newline
Scuola Normale Superiore,
Piazza dei Cavalieri 7,
56126 Pisa, Italy}
\email{dayana.pagliardini@sns.it}

\begin{abstract}
In this paper we study a slightly subcritical Choquard problem on a bounded domain $\Omega$. We prove that the number of positive solutions depends on the topology of the domain. In particular when the exponent of the nonlinearity approaches the critical one, we show the existence of cat$(\Omega)+1$ solutions. Here 
cat$(\Omega)$ denotes the Lusternik-Schnirelmann category.
\end{abstract}

\date{\today}

\subjclass[2010]{Primary 35J60, 58E05. Secondary 35J20}
\keywords{Choquard equation, Lusternik-Schnirelmann category}

\maketitle

\section{Introduction}
In the last few years a lot of mathematical efforts have been devoted to the Choquard type equation
\begin{equation}\label{choquard}
-\Delta U+V(x)u=(|x|^{-\mu}* |u|^q)|u|^{q-2}u\quad \text{in}\; \setR
\end{equation}
where $N\ge 3$, $0 < \mu < N$ and $V(x)$ is the external potential. This equation was introduced by Pekar to study the quantum theory of a polaron at rest. Then Choquard applied it to model an electron trapped in its own hole, in an approximation to the Hartree-Fock theory of one-component plasma \cite{Lieb}. In some cases equation $\eqref{choquard}$ is known also as the Schr\"odinger-Newton equation. There are a lot of studies about the existence, multiplicity and properties of solutions of the nonlinear Choquard equation $\eqref{choquard}$. In particular, if $N = 3$, $q = 2$, $\mu = 1$, and the potential is constant, the existence of ground states of equation $\eqref{choquard}$ was established in \cite{Lieb} and \cite{Lions} through variational methods, while uniqueness and nondegeneracy were obtained respectively in \cite{Lieb} and \cites{L, WW}.

For general $q$ and $\mu$, regularity, positivity, radial symmetry and decay property of the ground states were shown in \cites{CCS, MZ, MV}. In \cite{MV'} Moroz and Van Schaftingen proved the existence of positive ground states under the assumptions of Berestycki-Lions type, while the existence of sign changing solutions of the nonlinear Choquard equation was considered in \cites{CS, GMV, GV}.
Recently, in \cite{GY}, Gao and Yang established some existence result for the Brezis-Nirenberg type problem of the nonlinear Choquard equation, i.e.\
\begin{equation}\label{BN}
-\Delta u-\lambda u=\Big(\int_\Omega\dfrac{|u|^{\Ds}(x)}{|x-y|^\mu}\di x \Big)|u|^{\Ds -2}u\quad \text{in}\; \Omega,
\end{equation}
 where $\Omega \subset \setR$ is a bounded domain with Lipschitz boundary, $\lambda > 0$, $N\ge 3$ and $\Ds:=(2N-\mu)/(N-2)$ with $0<\mu<N$.

In this paper we are interested in the slightly subcritical Choquard problem 
\begin{equation}\label{P}
\begin{cases}
-\Delta u-\lambda u=\Big(\displaystyle\int_\Omega\dfrac{|u|^{p_\eps}(x)}{|x-y|^\mu}\di x \Big)|u|^{p_\eps -2}u\qquad &\text{in}\; \Omega\\
u=0\qquad &\text{in}\; \mathbb{R}^N \setminus \Omega\\
u>0\qquad &\text{in}\; \Omega,
\end{cases}
\end{equation}
where $\eps>0$, $\Omega$ is a regular bounded domain of $\mathbb{R}^N$, $N > 3$, $0<\mu<N$, $\lambda \ge 0$ and $p_\eps:=2_\mu^*-\eps$, with $2_\mu^*$ that is the critical exponent defined as $2_\mu^*:=\dfrac{2N-\mu}{N-2}$.

Our goal is to show that the number of positive solutions of nonlinear Choquard equation depends on the topology of the domain when the exponent is very close to the critical one, i.e.\ we want to prove the following:
\begin{thm}\label{Mainthm}
There exists $\bar{\eps}>0$ such that for every $\eps \in (0, \bar{\eps}]$, Problem $\eqref{P}$ has at least cat$_{\Omega}(\Omega)$ low energy solutions. Moreover, if 
$\Omega$ is not contractible, there exists another solution with higher energy.
\end{thm}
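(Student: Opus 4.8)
\section*{Proof strategy for Theorem \ref{Mainthm}}

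The plan is to carry out the Benci--Cerami ``photography'' scheme, adapted to the nonlocal Choquard term. Solutions of \eqref{P} are, up to the usual truncation replacing $|u|^{\Peps}$ by $(u^+)^{\Peps}$ together with the maximum principle, critical points of
\[
J_\eps(u)=\frac12\int_\Omega|\nabla u|^2-\frac\lambda2\int_\Omega u^2-\frac{1}{2\Peps}\int_\Omega\int_\Omega\frac{|u(x)|^{\Peps}|u(y)|^{\Peps}}{|x-y|^\mu}\di x\di y
\]
on $\HO$, which we study on the Nehari manifold $\Neps=\{u\in\HO\setminus\{0\}:\langle J_\eps'(u),u\rangle=0\}$. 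Because $\Peps<\Ds$, the Hardy--Littlewood--Sobolev inequality combined with the compact embedding $\HO\hookrightarrow L^{2N\Peps/(2N-\mu)}(\Omega)$ makes the nonlocal term weakly continuous, so $J_\eps|_{\Neps}$ is a $C^{1,1}$ functional, bounded below on $\Neps$ for $0\le\lambda<\lambda_1(\Omega)$, and satisfies the Palais--Smale condition at every level; Lusternik--Schnirelmann theory then produces at least $\operatorname{cat}\big(J_\eps^{d}\big)$ critical points of $J_\eps|_{\Neps}$ in each sublevel set $J_\eps^{d}:=\{u\in\Neps:J_\eps(u)\le d\}$. Everything thus reduces to the topological estimate $\operatorname{cat}\big(J_\eps^{c_\infty+\delta}\big)\ge\operatorname{cat}_\Omega(\Omega)$ for a suitable $\delta>0$ and all $\eps$ below some $\bar\eps$, where $c_\infty=\frac{N-\mu+2}{2(2N-\mu)}\,S_{H,L}^{(2N-\mu)/(N-\mu+2)}$ is the ground state level of the critical limit problem $-\Delta U=(|x|^{-\mu}*|U|^{\Ds})|U|^{\Ds-2}U$ on $\setR$, with $S_{H,L}$ the sharp constant of the Choquard--Sobolev inequality on $\Dud$, attained by the Aubin--Talenti instantons \cite{GY}; a standard concentration argument also gives $\inf_{\Neps}J_\eps\to c_\infty$ as $\epz$.

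Two maps realise this estimate. The construction map $\Phi_\eps\colon\Omega\to\Neps$ sends $\xi$ to the Nehari projection of a truncated instanton $\chi(\cdot-\xi)\,\delta_\eps^{-(N-2)/2}U\big((\cdot-\xi)/\delta_\eps\big)$ concentrated at $\xi$ with $\delta_\eps\to0$; evaluating its Dirichlet energy, its $L^2$ norm and the Riesz double integral (via the explicit instanton and the corresponding Hardy--Littlewood--Sobolev integrals) yields $\sup_{\xi\in\Omega}J_\eps(\Phi_\eps(\xi))=c_\infty+o(1)$, so $\Phi_\eps(\Omega)\subset J_\eps^{c_\infty+\delta}$ for $\eps$ small. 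The barycenter map $\beta(u)=\big(\int_\Omega x\,|\nabla u|^2\di x\big)\big/\big(\int_\Omega|\nabla u|^2\di x\big)$ is paired with a compactness estimate: there are $\delta>0$, $\bar\eps>0$ and $r>0$, with $\Omega$ a deformation retract of its $r$-neighbourhood $\Omega_r$, such that $u\in\Neps$, $\eps\le\bar\eps$ and $J_\eps(u)\le c_\infty+\delta$ force $\beta(u)\in\Omega_r$. This is proved by contradiction: if $\eps_n\to0$, $u_n\in\mathcal N_{\eps_n}$, $J_{\eps_n}(u_n)\to c_\infty$ and $\beta(u_n)\notin\Omega_r$, then $\{u_n\}$ is bounded in $\HO$ and a Choquard version of the Brezis--Lieb lemma for the double integral, inserted into a concentration--compactness analysis, forces $|\nabla u_n|^2$ to concentrate, modulo translation, at a single point $x_0\in\overline\Omega$ --- dichotomy being ruled out since two separating bubbles would carry energy $2c_\infty>c_\infty+\delta$ (choose $\delta<c_\infty$), and vanishing since $J_{\eps_n}(u_n)\to c_\infty>0$; hence $\beta(u_n)\to x_0\in\Omega_r$, a contradiction. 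Since the concentration point of $\Phi_\eps(\xi)$ is $\xi+o(1)$ uniformly in $\xi$, the composition $\beta\circ\Phi_\eps\colon\Omega\to\Omega_r$ is homotopic to the inclusion $\Omega\hookrightarrow\Omega_r$, which gives $\operatorname{cat}\big(J_\eps^{c_\infty+\delta}\big)\ge\operatorname{cat}_{\Omega_r}(\Omega)=\operatorname{cat}_\Omega(\Omega)$ and hence that many positive solutions of energy at most $c_\infty+\delta$.

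For the extra, higher-energy solution when $\Omega$ is not contractible, one notes first that $\Phi_\eps(\Omega)$ cannot be contractible inside $J_\eps^{c_\infty+\delta}$, for composing such a contraction with $\beta$ would contract $\Omega$ inside $\Omega_r$ and so make $\Omega$ contractible. As $\Neps$ itself is contractible, $\Phi_\eps(\Omega)$ does admit contractions $h$ in $\Neps$, and we set $c^*=\inf_h\max\{J_\eps(h(u,t)):u\in\Phi_\eps(\Omega),\ t\in[0,1]\}$; every admissible $h$ must leave $J_\eps^{c_\infty+\delta}$, so $c^*\ge c_\infty+\delta>\sup_{\xi}J_\eps(\Phi_\eps(\xi))$, and since $J_\eps|_{\Neps}$ satisfies (PS) at all levels (the exponent $\Peps$ being subcritical) a standard deformation/minimax argument shows $c^*$ is a critical value, strictly above the energies of the solutions obtained before, hence carried by a genuinely new solution. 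The decisive difficulty is the compactness step above: proving the sharp Brezis--Lieb-type splitting of the nonlocal term and, through it, the energy threshold $2c_\infty$ below which no splitting of mass can occur, uniformly as $\epz$; the remainder is the classical photography argument together with lengthy but routine estimates on the Choquard instantons.
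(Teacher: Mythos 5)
Your overall strategy is the one the paper follows (Nehari manifold, $m_\eps\to m_*$, truncated instantons, barycenter map, Lusternik--Schnirelmann, minimax over contractions for the extra solution), but there is a genuine gap in the decisive compactness step. In your contradiction argument you rule out \emph{dichotomy} (two bubbles cost $2c_\infty$) and \emph{vanishing} (the level is positive), and then conclude that $|\nabla u_n|^2$ concentrates at a single point. You have not excluded the third alternative: that $u_n$ converges \emph{strongly} in $H_0^1(\Omega)$ to a nontrivial limit $v$, with no concentration at all. This case is fatal for the barycenter argument, because $\beta(v)$ only lies in the convex hull of $\overline\Omega$, not necessarily in $\Omega_r^+$ (think of an annulus and a radial $v$), so no contradiction with $\beta(u_n)\notin\Omega_r^+$ is obtained. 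What kills this case is precisely the fact that the critical level $m_*^\Omega=\inf_{\mathcal N_*^\Omega}I_*^\Omega$ equals $m_*$ and is \emph{never attained} on a bounded domain --- the Choquard analogue of the non-attainment of the Sobolev constant on proper subdomains (Lemmas \ref{Shlomeganotachiev} and \ref{m*nonraggiunto}); a strong limit $v$ would be a minimizer at level $m_*$, which is impossible. The paper makes this work by first projecting $u_n$ onto the critical Nehari manifold $\mathcal N_*^\Omega$ (showing the projection constants tend to $1$), using Ekeland to produce a genuine $(PS)$ sequence for the fixed critical functional $I_*^\Omega$ at level $m_*$, and only then applying the Struwe-type splitting theorem; the non-attainment then forces $v=0$, $k=1$, a single bubble. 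Your sketch applies the concentration analysis directly to a sequence with varying exponents $p_{\eps_n}$, which also needs justification, but the missing non-attainment lemma is the essential ingredient you must add.

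A secondary, fixable point concerns the extra solution: you invoke the contractibility of $\Neps$ to guarantee admissible contractions of $\Phi_\eps(\Omega)$ with finite max energy. This is true but not free, since the Nehari projection $t_\eps(u)u$ is only defined (and only continuous and bounded) on functions whose positive part does not degenerate. The paper handles this explicitly by building the compact contractible cone $T_\eps=\{t_\eps(u)u:\ u=\theta\bar v+(1-\theta)\Psi\}$ over a fixed positive function $\bar v$ and proving a uniform lower bound on $\int_\Omega\int_\Omega |x-y|^{-\mu}|u^+(x)|^{p_\eps}|u^+(y)|^{p_\eps}\di x\di y$ along the homotopy, which is what makes $c^*$ (their $c_\eps$) finite. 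You should incorporate that estimate rather than assert contractibility of $\Neps$ outright.
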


Here cat$_{\Omega}(\Omega)$ denotes the Lusternik-Schnirelmann category of $\Omega$.

To tackle Problem $\eqref{P}$ we use variational methods and hence we look for critical points of a suitable functional and we prove a multiplicity result through category methods. If cat$_{\Omega}(\Omega)=1$ we get the existence of a solution (which can be obtained in a simpler way by the Mountain Pass Theorem); on the other side, if $\Omega$ is not contractible, we obtain cat$_{\Omega}(\Omega)$ low energy solutions, and another solution with higher energy. For this reason, hereafter, we assume cat$_{\Omega}(\Omega)>1$.

This type of result was historically introduced by Bahri and Coron for local problems in \cite{BaC} and recovered by Benci and Cerami in \cite{BC} and by Benci, Cerami and Passaseo in \cite{BCP}.

An application of these methods to nonlocal problems is exhibited by Siciliano in \cite{S}, which investigated the existence of positive solutions to the Schr\"odinger-Poisson-Slater system 
\begin{equation}
\begin{cases}
-\Delta u+u+l \Phi u=|u|^{p-2}u\quad &\text{in}\; \Omega\\
-\Delta \Phi=u^2 \quad &\text{in}\;\Omega\\
u=\Phi=0\quad &\text{on}\;\partial \Omega,
\end{cases}
\end{equation}
where $\Omega \subset \mathbb{R}^3$ is a bounded domain, $l>0$ is a fixed parameter and $p<2^*$. Siciliano proved that, when the power exponent is near to the critical Sobolev exponent, the number of positive solutions is greater than the Lusternik-Schnirelmann category of $\Omega$.

Also, in the same spirit, in \cite{AGSY}, Alves, Gao, Squassina and Yang studied the semiclassical limit for the singularly perturbed Choquard equation
\[
-\eps^2\Delta u+V(x)u=\eps^{\mu-3}\Big(\int_{\mathbb{R}^3}\frac{Q(y)G(u(y))}{|x-y|^\mu}\di y \Big)Q(x)g(u)\quad \text{in}\; \mathbb{R}^3
\]
with $0<\mu<3$, $\eps>0$, $V$ and $Q$ that are two continuous real functions on $\mathbb{R}^3$ and $G$ that is the primitive of $g$ which is of critical growth. They proved, under suitable assumptions on $g$, the exitence of ground states for the critical Choquard equation with constant coefficients, the existence and multiplicity of semi-classical solutions, and they used variational methods to characterize the concentration behavior.

To prove Theorem \ref{Mainthm} we construct a map $\psi_\eps$ from $\Omega_r^-$ to a suitable subspace $\mathcal{N}\subset H^1_0(\Omega)$ 
and a function $\beta:\mathcal{N}\rightarrow \Omega_r^+$, defined respectively in (\ref{psii}) and in (\ref{eq:bari}), where 
\begin{align*}
\Omega_r^+:&=\{x\in \mathbb{R}^N :\di (x,\Omega)\le r \},\\
\Omega_r^-:&=\{x\in \Omega :\di (x,\partial \Omega)\ge r \}
\end{align*}
and $r>0$ is chosen sufficiently small such that both sets are homotopically equivalent to $\Omega$. Then a classical result in critical point theory 
(recalled in Section \ref{sec2}) gives us the existence of cat$_{\Omega}(\Omega)$ solutions of (\ref{P}). Also, we construct a compact and contractible set 
$T_\eps\subset H^1_0(\Omega)$ which gives the existence of another solution if $\Omega$ is not contractible. The variational structure of this problem and 
the functional setting are introduced in in Section \ref{sec3}, while the main Theorem is proved in Section \ref{sec5}. 
Also, in section \ref{sec4} we present two limit cases of the Problem $\eqref{P}$ whose study is useful to achieve the proof.

\section{Notation and preliminaries}\label{sec2} 

Given $u:\mathbb{R}^N \rightarrow \mathbb{R}$ a measurable function, for $p\in [1,+\infty)$ we denote with $| u |_{L^p(\Omega)}$ the standard $L^p$-norm of the function $u$ in $\Omega$. When the domain is clear we use the notation $|u|_p$.

As usual $\mathcal{D}^{1,2}(\Omega)$ is the completion of $C_0^\infty(\Omega)$ with respect to the norm 
$\|u \|^2_{\mathcal{D}^{1,2}(\Omega)}:={\int_\Omega |\nabla u |^2\di x}$, $H_0^1(\Omega)$ is the Sobolev space with squared norm
\[
\|u\|^2=|\nabla u|_2^2+|u|_2^2
\]
and $H^{-1}$ is its dual. Since $\lambda \ge 0$,
\[
\|u\|_\lambda:=|\nabla u|_2^2+\lambda |u|_2^2
\]
is an equivalent norm in $H_0^1(\Omega)$.

We use $B_\rho(y)$ for the closed ball of center $y$ and radius $\rho>0$. When $y=0$ we will simply write $B_\rho$.

Hereafter we will use the letter $c$ to denote a positive constant whose value can be different from a line to another.

We remind some preliminary results which will be useful in the sequel (see \cite{c}).

\begin{defn}\label{PalaisSmale}
Let $H$ be a Hilbert space and $I\in C^1(H)$. We say that $I$ satisfies the $(PS)$-condition on $H$ if every subsequence $\{u_n\}_{n\in \mathbb{N}}$ such that
\begin{equation}\label{PScondition}
\{I(u_n)\} \; \text{is bounded}\quad \text{and}\quad I'(u_n)\rightarrow 0\quad \text{in}\; H^{-1}(\Omega),
\end{equation}
has a converging subsequence. If a sequence satisfies $\eqref{PScondition}$ it is called Palais-Smale sequence (or $(PS)$-sequence).
\end{defn}

\begin{defn}\label{category}
Let $X$ be a topological space. The category of $A\subseteq X$ with respect to $X$, denoted by cat$_X(A)$, is the least integer $k$ such that $A\subseteq A_1 \cup \cdots \cup A_k$ with $A_i$ closed and contractible in $X$ for every $i=1,\cdots, k$.
\end{defn}

We set cat$_X(\varnothing)=0$, cat$_X(A)=+\infty$ if there are no integers with the above property and we will write cat$(X)$ for cat$_X(X)$.

\begin{remark}\label{catinclusione} [see \cite{BC}]
Let $X$ and $Y$ be topological spaces. If $f:X \rightarrow Y$, $g:Y \rightarrow~X$ are continuous operators such that $ g \circ f$ is homotopic to the identity on $X$, then cat$_X(X)\le$ cat$_X(Y)$.
\end{remark}

For the proof of the main theorem we will use the following result (see \cite{BCP}):
\begin{thm}\label{LSthm}
Let $J$ a $C^1$ real functional on a complete $C^{1,1}$ Banach manifold $M$. If $J$ is bounded from below and satisfies the $(PS)$-condition on $M$, then $J$ has at least cat$(J^d)$ critical points in $J^d$, where $J^d:=\{ u\in M: J(u)<d\}$. Moreover if $M$ is contractible and cat$(J^d)>1$, there is at least one critical point $u\notin J^d$.
\end{thm}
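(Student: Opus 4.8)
The plan is to reproduce, in the $C^{1,1}$ Banach setting, the classical Lusternik--Schnirelmann machinery: a quantitative deformation lemma together with a minimax scheme indexed by the category. The substantive ingredient is the deformation lemma, so I would begin there. Since $M$ is a $C^{1,1}$ Banach manifold, the $C^1$ functional $J$ admits a locally Lipschitz pseudo-gradient vector field on its set of regular points, and the $C^{1,1}$ regularity is exactly what is needed to integrate it into a negative pseudo-gradient flow. Feeding this flow the $(PS)$-condition (Definition~\ref{PalaisSmale}) yields, for every $c\in\R$: the critical set $K_c:=\{u\in M: J(u)=c,\ J'(u)=0\}$ is compact; and for every neighbourhood $\mathcal U$ of $K_c$ there exist $\eta>0$ and a continuous $H:[0,1]\times M\to M$ with $H(0,\cdot)=\id$, each $H(t,\cdot)$ a homeomorphism of $M$, $t\mapsto J(H(t,u))$ non-increasing, $H(t,u)=u$ when $|J(u)-c|\ge 2\eta$, and $H(1,\{u:J(u)\le c+\eta\}\setminus\mathcal U)\subseteq\{u:J(u)\le c-\eta\}$, with $\mathcal U=\varnothing$ allowed when $K_c=\varnothing$. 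Because $J$ is bounded below and $H$ never raises $J$, every $H(t,\cdot)$ maps $J^d$ into itself and $t\mapsto H(t,\cdot)|_{J^d}$ is a homotopy in $J^d$ from the identity to $H(1,\cdot)|_{J^d}$; pulling back a category cover of $H(1,A)$ through the homeomorphism $H(1,\cdot)$ and homotoping the inclusion via $H$ (compare Remark~\ref{catinclusione}) gives the monotonicity $\mathrm{cat}_{J^d}(H(1,A))\ge\mathrm{cat}_{J^d}(A)$ for every $A\subseteq J^d$.

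Next I would set up the minimax. Write $m:=\mathrm{cat}(J^d)$, assuming $m<\infty$ (if $m=\infty$ the same scheme produces infinitely many critical points), and for $1\le k\le m$ put
\[
\Gamma_k:=\{A\subseteq J^d:\ \mathrm{cat}_{J^d}(A)\ge k\},\qquad c_k:=\inf_{A\in\Gamma_k}\ \sup_{u\in A}J(u).
\]
Then $\Gamma_k\ni J^d$ is nonempty, $\Gamma_{k+1}\subseteq\Gamma_k$, and $\inf_M J\le c_1\le\cdots\le c_m<d$, the strict inequality coming from applying the deformation lemma once near the (regular) level $d$ to push $J^d$ down into a closed sublevel set $\{u:J(u)\le d'\}$, $d'<d$, which still has $\mathrm{cat}_{J^d}\ge k$ by the monotonicity above. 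The standard argument then shows each $c_k$ is a critical value: if $c:=c_k$ were regular, I would apply the deformation lemma with $\mathcal U=\varnothing$ to a near-optimal $A\in\Gamma_k$ with $\sup_A J<c+\eta$, producing $H(1,A)\subseteq\{u:J(u)\le c-\eta\}\subseteq J^d$ with $\mathrm{cat}_{J^d}(H(1,A))\ge k$, hence $H(1,A)\in\Gamma_k$ and $\sup_{H(1,A)}J\le c-\eta<c_k$, a contradiction. Finally, if several consecutive levels coincide, $c_k=\cdots=c_{k+j}=:c$ with $j\ge1$, a further use of the deformation lemma with $\mathcal U$ a small neighbourhood of $K_c$ and a category comparison give $\mathrm{cat}_{J^d}(K_c)\ge j+1$, so $K_c$ contains at least $j+1$ points (a finite set has category at most its cardinality). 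Summing the multiplicities over the distinct critical values among $c_1,\dots,c_m$ shows $J$ has at least $m=\mathrm{cat}(J^d)$ critical points in $J^d$, which is the first assertion.

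For the extra solution I would argue by contradiction: suppose $M$ is contractible, $\mathrm{cat}(J^d)>1$, and $J$ has no critical point outside $J^d$, so $J$ has no critical value $\ge d$. By $(PS)$, $\|J'\|$ is bounded away from $0$ on every band $\{u:d'\le J(u)\le R\}$ (choosing a regular $d'<d$ so that $[d',d]$ also contains no critical values), hence the negative pseudo-gradient flow lowers $J$ at a rate bounded below on each such band and drives every point of $M$ into $\{u:J(u)\le d'\}$ in finite, continuously dependent time; reparametrising this flow exhibits $\{u:J(u)\le d'\}$ as a deformation retract both of $M$ and of $J^d$. Thus $J^d$ has the homotopy type of the contractible space $M$, so $\mathrm{cat}(J^d)=1$, contradicting $\mathrm{cat}(J^d)>1$; hence a critical point $u\notin J^d$ must exist.

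I do not expect a single deep step: the content lies in correctly building the pseudo-gradient flow and the deformation lemma on a $C^{1,1}$ manifold that need not be Hilbert, and in the point-set bookkeeping that makes the category behave monotonically under the deformations and keeps the minimax levels strictly below $d$ — this last point being where the regularity of $d$, automatic in the intended applications, is invoked.
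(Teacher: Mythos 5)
This theorem is not proved in the paper: it is quoted from Benci--Cerami--Passaseo \cite{BCP} and used as a black box, so there is no in-paper argument to compare yours against. Your reconstruction is the standard Lusternik--Schnirelmann machinery (pseudo-gradient field integrated to a descent flow, quantitative deformation lemma via $(PS)$, minimax over the classes $\Gamma_k$ of sets of category at least $k$, multiplicity at degenerate levels, and a deformation-retract argument for the extra critical point outside the sublevel), which is essentially the proof in \cite{BCP} and \cite{BC}; as a sketch it is sound, modulo the usual remark that on a Banach (non-Hilbert) manifold the pseudo-gradient construction presupposes a Finsler structure, which a complete $C^{1,1}$ Banach manifold carries in the sense of Palais.

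The one point that is a genuine gap relative to the statement as written is your treatment of the strict inequality $c_m<d$ (and, in the last paragraph, the choice of a regular level $d'<d$ with no critical values in $[d',d]$). This silently adds the hypothesis that $d$ is not a critical value, nor an accumulation point of critical values from below; neither is assumed in the theorem, and if critical values accumulate at $d$ the band you need cannot be chosen, while $c_k=d$ would place the corresponding critical points on the boundary level and hence outside $J^d=\{u:J(u)<d\}$. The friction comes entirely from the paper's use of a strict inequality in the definition of $J^d$: in \cite{BCP} the sublevel is closed, and the paper itself applies the theorem to the closed sublevel $I_\eps^c=\{u\in\mathcal N_\eps:I_\eps(u)\le c\}$. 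With $J^d:=\{u:J(u)\le d\}$ the bound $c_1\le\dots\le c_m\le d$ is all you need and no regularity of the level $d$ enters the first assertion; similarly, in the second part the non-existence of critical values strictly above $d$ lets you retract $M$ onto $\{J\le d+\sigma\}$ and then, by a second deformation across the critical-point-free band $(d,d+\sigma]$, onto $\{J\le d\}$ itself. You should therefore either prove the theorem for the closed sublevel (which is what is actually applied in Section \ref{sec5}) or state explicitly the extra regularity assumption on $d$ that your argument uses.
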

Finally we briefly recall the well-known Hardy-Littlewood-Sobolev inequality and some results linked to it (see \cite{LL} and \cite{GY}):
\begin{prop}\label{HardyLittS}
Let $t,r>1$ and $0<\mu<N$ with $1/t+\mu/N+1/r=2$, $f\in L^t(\setR)$ and $u\in L^r(\setR)$. There exists a constant $C(t,N,	\mu,r)$, independent of $f$ and $h$, such that
\begin{equation}\label{HLS}
\int_\setR \int_\setR \frac{f(x)u(y)}{|x-y|^\mu}\di x \di y\le C(t,N,\mu,r)|f|_t|u|_r.
\end{equation}
Equality in $\eqref{HLS}$ holds if and only if 
\begin{equation}\label{eqHLS}
f(x)=U_{R,a}(x):=C\Big(\frac{R}{1+R^2|x-a|^2}\Big)^{\frac{N-2}{2}},
\end{equation}
where $C:=(N(N-2))^{\frac{N-2}{4}}$, $a\in \setR$ and $R\in (0,+\infty)$.
\end{prop}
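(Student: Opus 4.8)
The plan is to establish the integral inequality \eqref{HLS} first with a finite (not necessarily sharp) constant, and only then, in a second and more delicate stage, to identify the sharp constant and the extremal functions. This is the classical argument of Lieb, so in the body of the paper one may simply invoke \cite{LL}; here is the structure.

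\emph{Step 1 (the inequality).} By duality it is enough to show that the Riesz potential $u\mapsto I_\mu u:=|x|^{-\mu}*u$ maps $L^r(\setR)$ boundedly into $L^{t'}(\setR)$, where $1/t'=1/r+\mu/N-1\in(0,1)$ by the hypothesis $1/t+\mu/N+1/r=2$; indeed then $\iint \tfrac{f(x)u(y)}{|x-y|^\mu}\di x\di y=\int_\setR f\,(I_\mu u)\le |f|_t\,|I_\mu u|_{t'}$ by H\"older. To bound $I_\mu$ one splits the kernel as $|x|^{-\mu}=|x|^{-\mu}\mathbf 1_{B_\rho}+|x|^{-\mu}\mathbf 1_{\setR\setminus B_\rho}$: the first piece lies in $L^a(\setR)$ for every $a<N/\mu$, the second in $L^b(\setR)$ for every $b>N/\mu$, so Young's convolution inequality applied to each piece together with an optimization in $\rho>0$ produces a weak-type $(r,t')$ estimate for $I_\mu$, whence the strong estimate follows by the Marcinkiewicz interpolation theorem. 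This yields \eqref{HLS} with some constant $C(t,N,\mu,r)$.

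\emph{Step 2 (reduction to radial decreasing profiles).} Since $|f|_t$, $|u|_r$ and the kernel $|x|^{-\mu}$ are all invariant under the symmetric decreasing rearrangement $h\mapsto h^*$, the Riesz rearrangement inequality gives $\iint \tfrac{f(x)u(y)}{|x-y|^\mu}\le \iint \tfrac{f^*(x)u^*(y)}{|x-y|^\mu}$. Hence the supremum of the left-hand side of \eqref{HLS} over the unit spheres of $L^t$ and $L^r$ is attained, if at all, at nonnegative, radially symmetric, nonincreasing $f,u$; and by the strict version of Riesz's inequality, equality in \eqref{HLS} forces $f$ and $u$ to be, up to translation, such functions. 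This reduces the extremal problem to radial profiles.

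\emph{Step 3 (existence and identification of extremals, conformal case).} In the case relevant to this paper one has $t=r=\tfrac{2N}{2N-\mu}$, conjugate to the pairing $f=|u|^{\Ds}$, $u\in\Dud$ (note $\Ds\,t=\tfrac{2N}{N-2}$). Passing to the sphere $S^N$ through stereographic projection, the functional in \eqref{HLS} becomes invariant under the conformal group of $S^N$; combining this invariance with the compactness of radial nonincreasing normalized sequences (Helly's selection theorem) one produces a maximizer, which by Step~2 may be taken radial decreasing, and by positive-definiteness of $I_\mu$ one may take $u$ a positive multiple of $f$. Such an $f$ satisfies, up to constants, the Euler--Lagrange equation $\lambda f^{r-1}=|x|^{-\mu}*f$ in $\setR$; classifying its positive solutions by the moving-plane (or moving-sphere) method for integral equations shows that, modulo dilations and translations, $f$ is a constant multiple of $\bigl(1+|x|^2\bigr)^{-(2N-\mu)/2}$, equivalently $u$ is, up to a constant, the Aubin--Talenti bubble $U_{R,a}$ of \eqref{eqHLS} (recall $U_{R,a}$ solves $-\Delta U_{R,a}=U_{R,a}^{\,2N/(N-2)-1}$ and $\Ds(N-2)/2=(2N-\mu)/2$). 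Together with Step~2 this shows that equality in \eqref{HLS} holds precisely for these profiles.

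The main obstacle is the last stage: obtaining the maximizer in the critical, conformally invariant regime and then pinning down its explicit shape. The embedding is not compact at the conformal exponent, so existence cannot be extracted from a naive minimizing-sequence argument — one must exploit the conformal symmetry (the competing-symmetries method of Carlen--Loss achieves existence and identification simultaneously) — and the classification of the positive solutions of the resulting nonlinear integral equation requires the integral-equation version of the moving-plane technique. Steps 1 and 2, by contrast, are routine.
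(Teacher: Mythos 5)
The paper does not actually prove this proposition: it is recalled as a classical result with a pointer to Lieb--Loss \cite{LL} and to \cite{GY}, so there is no internal argument to measure yours against, and your own suggestion to ``simply invoke \cite{LL}'' is exactly what the authors do. That said, your three-step outline is a faithful and essentially correct sketch of the standard proof: the non-sharp inequality by duality, kernel splitting and Marcinkiewicz interpolation; reduction to radial nonincreasing profiles via the Riesz rearrangement inequality (with Lieb's strict version needed for the ``only if'' direction); and existence and identification of extremals in the conformal case $t=r=\frac{2N}{2N-\mu}$, where compactness genuinely fails and one must exploit the conformal invariance (Lieb's argument on the sphere, or Carlen--Loss competing symmetries), the classification of the resulting integral equation coming from the moving-plane method of Chen--Li--Ou. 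Two remarks on the statement rather than on your argument. First, the equality assertion is only meaningful on the diagonal $t=r$ (for general exponents the extremals are not explicitly known); your Step 3 implicitly restricts to this case, while the proposition as written does not. Second, you are right to distinguish the extremizer of the bilinear form, namely $h(x)=c\bigl(1+R^2|x-a|^2\bigr)^{-\frac{2N-\mu}{2}}$, i.e.\ $f=u=|U_{R,a}|^{\Ds}$ up to constants, from the function $U_{R,a}$ of \eqref{eqHLS} with exponent $\frac{N-2}{2}$, which extremizes the quotient \eqref{SHL} obtained by combining HLS with the Sobolev inequality; the proposition's literal claim ``$f=U_{R,a}$'' conflates the two, and your write-up gets the distinction right.
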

So, for all $u\in \mathcal{D}^{1,2}(\setR)$ we have
\[
\Big(\int_\setR \int_\setR \frac{|u(x)|^{\Ds}|u(y)|^{\Ds}}{|x-y|^\mu}\di x \di y \Big)^{\frac{N-2}{2N-\mu}}\le C(N,\mu)^{\frac{N-2}{2N-\mu}}|u|_{2^*}^2
\]
and denoting with $S_{H,L}$ the best constant defined by
\begin{equation}\label{SHL}
S_{H,L}:=\inf_{u\in \mathcal{D}^{1,2}(\setR)\setminus \{ 0\}}\frac{\displaystyle \int_\setR |\nabla u|^2\di x}{\displaystyle\Big(\int_\setR \int_\setR \frac{|u(x)|^{\Ds}|u(y)|^{\Ds}}{|x-y|^\mu}\di x \di y\Big)^{\frac{N-2}{2N-\mu}}},
\end{equation}
we note from Proposition \ref{HardyLittS} that it is achieved if and only if $u=U_{R,a}$. In addition
\[
S_{H,L}=\frac{S}{C(N,\mu)^{\frac{N-2}{2N-\mu}}}
\]
where $S$ is the best Sobolev constant.

\begin{lemma}\label{Shlomeganotachiev}\cite{GY}*{Lemma $1.3$}
Let $N\ge 3$. For every $\Omega\subset \setR$ open, we have
\begin{equation}
S_{H,L}^\Omega:=\inf_{u\in \mathcal{D}_0^{1,2}(\Omega)\setminus \{ 0\}}\frac{\displaystyle\int_\Omega |\nabla u|^2\di x}{\displaystyle \Big(\int_\Omega \int_\Omega \frac{|u(x)|^{\Ds}|u(y)|^{\Ds}}{|x-y|^\mu}\di x \di y\Big)^{\frac{N-2}{2N-\mu}}}=S_{H,L}.
\end{equation}
Moreover $S_{H,L}^\Omega$ is never achieved unless $\Omega =\setR$.
\end{lemma}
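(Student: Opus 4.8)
The plan is to establish the two assertions separately, the identity $S_{H,L}^\Omega=S_{H,L}$ first and the non-attainment afterwards.

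One inequality in the identity is immediate: extending any $u\in\mathcal{D}_0^{1,2}(\Omega)$ by zero to all of $\setR$ changes neither $\int_\Omega|\nabla u|^2\di x$ nor the Choquard double integral and produces an element of $\mathcal{D}^{1,2}(\setR)$, so the infimum defining $S_{H,L}^\Omega$ is taken over a subclass of the competitors for $S_{H,L}$, whence $S_{H,L}^\Omega\ge S_{H,L}$. For the reverse inequality I would run the standard concentration/truncation argument, adapted to the nonlocal term. Fix $x_0\in\Omega$ and $\rho>0$ with $B_\rho(x_0)\subset\Omega$, pick $\varphi\in C_0^\infty(B_\rho(x_0))$ with $\varphi\equiv 1$ on $B_{\rho/2}(x_0)$, and set $u_R:=\varphi\,U_{R,x_0}\in C_0^\infty(\Omega)\subset\mathcal{D}_0^{1,2}(\Omega)$, where $U_{R,x_0}$ is the extremal of \eqref{eqHLS}. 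As $R\to+\infty$ the bubble concentrates at $x_0$, and the usual expansions give $|\nabla u_R|_2^2=|\nabla U_{R,x_0}|_2^2+O(R^{-(N-2)})$, while decomposing $\setR\times\setR$ according to whether each variable lies in $B_{\rho/2}(x_0)$ and applying the Hardy–Littlewood–Sobolev inequality (Proposition \ref{HardyLittS}) to the off-diagonal error pieces shows that the denominator of the quotient in \eqref{SHL} evaluated at $u_R$ agrees with the one at $U_{R,x_0}$ up to a lower-order term. Since $U_{R,x_0}$ attains $S_{H,L}$, the quotient at $u_R$ tends to $S_{H,L}$, forcing $S_{H,L}^\Omega\le S_{H,L}$; together with the previous bound this gives the identity.

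For the non-attainment, suppose toward a contradiction that $\Omega\ne\setR$ and that $S_{H,L}^\Omega$ is attained by some $u\in\mathcal{D}_0^{1,2}(\Omega)\setminus\{0\}$. Replacing $u$ by $|u|$, which is still a minimizer since the double integral depends only on $|u|$ and $|\nabla|u||_2\le|\nabla u|_2$, we may assume $u\ge 0$; extending $u$ by zero then yields a nonnegative minimizer for $S_{H,L}$ on $\mathcal{D}^{1,2}(\setR)$. By the characterization recorded just after \eqref{SHL} — namely that $S_{H,L}$ is attained exactly on the family $\{U_{R,a}\}$ through the equality case of Proposition \ref{HardyLittS} — we conclude $u=c\,U_{R,a}$ for some $c>0$, $R\in(0,+\infty)$, $a\in\setR$. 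But every $U_{R,a}$ is strictly positive throughout $\setR$, while the zero-extension of an element of $\mathcal{D}_0^{1,2}(\Omega)$ vanishes outside $\Omega$; since $\setR\setminus\Omega\ne\varnothing$ this is a contradiction, so $S_{H,L}^\Omega$ is not attained unless $\Omega=\setR$.

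The parts I expect to be routine are the zero-extension inclusion and the identification of the minimizer via the already-recorded properties of $S_{H,L}$ and the $U_{R,a}$. The genuinely delicate point should be the asymptotic analysis in the truncation argument for the upper bound: unlike the classical Brezis–Nirenberg computation, the functional in \eqref{SHL} carries a double integral, so the cut-off errors have to be controlled by splitting the domain of integration into diagonal and off-diagonal regions and invoking Proposition \ref{HardyLittS} on each, which is the main technical obstacle — though by now a standard one.
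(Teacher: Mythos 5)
The paper does not actually prove this lemma---it is quoted directly from \cite{GY}*{Lemma $1.3$}---and your argument is precisely the standard one behind that reference: zero extension gives $S_{H,L}^\Omega\ge S_{H,L}$, truncated bubbles with Hardy--Littlewood--Sobolev control of the cut-off errors give the reverse inequality, and non-attainment follows by identifying a putative minimizer (extended by zero) with a strictly positive $U_{R,a}$ via the recorded equality case. The proof is correct as written; the only pedantic caveat, which the original statement shares, is that the final contradiction needs $\setR\setminus\Omega$ to be more than a set of zero capacity (for $\Omega=\setR\setminus\{0\}$ one has $\mathcal{D}_0^{1,2}(\Omega)=\mathcal{D}^{1,2}(\setR)$ and the infimum is attained), so ``$\Omega\ne\setR$'' should be read accordingly.
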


\section{Variational setting}\label{sec3}
To study Problem $\eqref{P}$, we consider the associated functional $I_\eps:H_0^1(\Omega) \rightarrow~\mathbb{R}$ given by
\begin{equation}\label{F}
I_\eps(u):=\dfrac{1}{2}\int_\Omega |\nabla u|^2\di x+\dfrac{\lambda}{2}\int_\Omega u^2 \di x-\dfrac{1}{2p_\eps}\int_\Omega \int_\Omega \dfrac{(u^+(x))^{p_\eps}(u^+(y))^{p_\eps}}{|x-y|^\mu}\di x \di y,
\end{equation}
where $u^+(x):=\max \{u(x),0 \}$ is the positive part of the function $u$.
\begin{prop}\label{problemiequivalenti}
The critical points of $I_\eps$ coincide with the solutions of $\eqref{P}$.
\end{prop}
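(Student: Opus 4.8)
The plan is to show the two directions of the equivalence, exploiting that $I_\eps\in C^1(\HO)$ because the Hardy–Littlewood–Sobolev inequality (Proposition \ref{HardyLittS}) with $t=r=\tfrac{2N}{2N-\mu p_\eps}$ controls the nonlocal term by a Sobolev-subcritical power of $\|u\|$; this gives a well-defined Fr\'echet derivative
\[
I_\eps'(u)[\varphi]=\int_\Omega \nabla u\cdot\nabla\varphi\,\di x+\lambda\int_\Omega u\varphi\,\di x-\int_\Omega\int_\Omega\frac{(u^+(x))^{p_\eps}(u^+(y))^{p_\eps-1}}{|x-y|^\mu}\varphi(y)\,\di x\,\di y
\]
for all $\varphi\in\HO$. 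So $u$ is a critical point of $I_\eps$ if and only if it is a weak solution of
\[
-\Delta u-\lambda u=\Bigl(\int_\Omega\frac{(u^+(x))^{p_\eps}}{|x-y|^\mu}\di x\Bigr)(u^+(y))^{p_\eps-1}\quad\text{in }\Omega,\qquad u=0\text{ on }\partial\Omega.
\]

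First I would show every critical point $u$ of $I_\eps$ is nonnegative, hence solves \eqref{P}. Testing the Euler–Lagrange identity with $\varphi=u^-=\max\{-u,0\}\in\HO$ kills the nonlocal term (since it only involves $u^+$) and leaves $-\|u^-\|_\lambda^2=0$, whence $u^-\equiv0$ and $u\ge0$, so $u^+=u$ and the equation in \eqref{P} holds weakly. Then I would invoke elliptic regularity together with the boundedness of the convolution potential $\int_\Omega|x-y|^{-\mu}u^{p_\eps}\,\di x$ (again via Hardy–Littlewood–Sobolev and a bootstrap, as in \cite{CCS} or \cite{MV}) to conclude $u\in C^2(\Omega)$, so that it is a classical solution; finally the strong maximum principle applied to $-\Delta u-\lambda u\ge0$ forces $u>0$ in $\Omega$ (unless $u\equiv0$, which one discards since $u=0$ is not a solution in the relevant class), giving the positivity requirement in \eqref{P}. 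Conversely, any solution $u$ of \eqref{P} satisfies $u>0$, so $u^+=u$ and $u^-=0$, and testing \eqref{P} against arbitrary $\varphi\in\HO$ shows exactly $I_\eps'(u)[\varphi]=0$, i.e.\ $u$ is a critical point of $I_\eps$.

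The only delicate point is the differentiability and regularity bookkeeping for the nonlocal term, which is why I would lean on Proposition \ref{HardyLittS}: the map $u\mapsto\int_\Omega\int_\Omega\frac{(u^+(x))^{p_\eps}(u^+(y))^{p_\eps}}{|x-y|^\mu}\di x\,\di y$ is $C^1$ on $\HO$ provided $2\le \tfrac{2Np_\eps}{2N-\mu}\le 2^*$, which holds for $1<p_\eps\le 2^*_\mu$ when $N>3$; continuity of the derivative follows from dominated convergence and the same inequality. Everything else — the sign trick with $u^-$, the regularity bootstrap, the strong maximum principle — is standard, so I expect no real obstacle, only careful verification of the exponent conditions so that all integrals converge.
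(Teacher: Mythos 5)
Your argument is correct and follows essentially the same route as the paper: test the Euler--Lagrange identity with $u^-$ so the nonlocal term vanishes, conclude $u^-\equiv 0$, and invoke the maximum principle for strict positivity, with the converse being immediate since solutions of \eqref{P} are positive. The extra care you take with the differentiability of the nonlocal term via Hardy--Littlewood--Sobolev and with the regularity needed for the strong maximum principle is detail the paper leaves implicit, not a different method.
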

\begin{proof}
It is easy to see that critical points of $\eqref{P}$ are solutions of
\begin{equation}\label{P+}
\begin{cases}
-\Delta u +\lambda u=\Big(\displaystyle\int_\Omega\dfrac{(u^+)^{p_\eps}(x)}{|x-y|^\mu}\di x \Big)(u^+)^{p_\eps -2}u^+\qquad &\text{in}\; \Omega\\
u=0\qquad &\text{in}\; \mathbb{R}^N \setminus \Omega.
\end{cases}
\end{equation}

We claim that $\eqref{P+}$ is equivalent to $\eqref{P}$. Indeed, if $u\in H_0^1(\Omega)$ is a solution of $\eqref{P}$, $u>0$ in $\Omega$, so $u=u^+$. Therefore $u$ is a solution of $\eqref{P+}$.

On the other hand, if $u$ is a critical point for $I_\eps$, 
\begin{equation}
\begin{aligned}
0=I_\eps'(u)[u^-]&=\int_\Omega |\nabla u^-|^2 \di x+\lambda \int_\Omega |u^-|^2\di x\\
&+\int_\Omega \int_\Omega\frac{(u^+(x))^{p_\eps}(u^+(y))^{p_\eps-2}u^+(y)u^-(y)}{|x-y|^\mu}\di x \di y=0.
\end{aligned}
\end{equation}
Since 
\[
\int_\Omega \int_\Omega\frac{(u^+(x))^{p_\eps}(u^+(y))^{p_\eps-2}u^+(y)u^-(y)}{|x-y|^\mu}\di x \di y=0,
\]
we get that $u^-=0$ a.e.\ from which $u=u^+\ge 0$. Then, the Maximum Principle ensures that $u>0$ in $\Omega$ and hence $u$ solves $\eqref{P}$.
\end{proof}

Note that the functional $I_\eps$ is unbounded from above and from below. Indeed if we compute $I_\eps(tu)$ we have $I_\eps(tu)\rightarrow -\infty$ as $t\rightarrow +\infty$ because $p_\eps>1$, and if we evaluate $I_\eps$ on $u_n(x):=\sin (nx)$ for $n\in \mathbb{N}$ and $x\in \Omega$ we obtain $I_\eps(u_n)\rightarrow +\infty$ as $n\rightarrow +\infty$.
Hence we want to restrict our functional $I_\eps$ to a suitable manifold $\mathcal{N}$ so that $I_\eps$ is bounded from below on $\mathcal{N}$. We define the Nehari manifold associated to functional $\eqref{F}$ as
\begin{equation*}
\mathcal{N}_\eps:=\{u\in H_0^1(\Omega)\setminus \{0\} :G_\eps(u)=0\},
\end{equation*}
where
\begin{equation}
G_\eps(u):=I_\eps'(u)[u]=\int_\Omega(|\nabla u|^2+\lambda u^2)\di x-\int_\Omega \int_\Omega \dfrac{(u^+(x))^{p_\eps}(u^+(y))^{p_\eps}}{|x-y|^\mu}\di x \di y.
\end{equation}
It is easy to see that on $\mathcal{N}_\eps$ the functional $\eqref{F}$ becomes
\begin{equation}\label{IsuNeps}
I_{\eps|_{\mathcal{N}_\eps}}(u)=\dfrac{p_\eps-1}{2p_\eps}\|u\|_\lambda^2 \ge0.
\end{equation}

We recall some properties of the Nehari manifold.
\begin{lemma}\label{condizioninehari}
We have
\begin{enumerate}
\item[i)]$\mathcal{N}_\eps$ is a $C^1$ manifold;
\item[ii)]there exists $c>0$ such that for every $u\in \mathcal{N}_\eps$, it results $c\le \|u\|$;
\item[iii)] for every $u\neq 0$ there exists a unique $t_\eps(u)>0$ such that $t_\eps(u)u\in \mathcal{N}_\eps$;
\item[iv)] 
Set 
\[
m_\eps:=\inf_{u\in \mathcal{N}_\eps}I_\eps(u).
\]
We have that $m_\eps>0$ and
the following equalities are true
\[
m_\eps=\inf_{u\neq 0} \max_{t_\eps>0} I_\eps(t_\eps u)=\inf_{g\in \Gamma_\eps}\max_{t_\eps\in[0,1]} I_\eps(g(t_\eps)),
\]
where
\[
\Gamma_\eps:=\{g\in C([0,1],H_0^1(\Omega)):g(0)=0, I_\eps(g(1))\le 0, g(1) \neq 0 \}.
\]
\end{enumerate}
Moreover $m_\eps$ is achieved by a function $u_\eps \in \mathcal{N}_\eps$, i.e.\
\[
m_\eps=\min_{u\in \mathcal{N}_\eps}I_\eps(u)=I_\eps(u_\eps).
\]
\end{lemma}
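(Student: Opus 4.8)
The plan is to establish the four items of Lemma~\ref{condizioninehari} by the standard Nehari-manifold arguments, adapted to the Choquard nonlinearity via the Hardy--Littlewood--Sobolev inequality of Proposition~\ref{HardyLittS}. First I would record the auxiliary estimate that for $u\in H_0^1(\Omega)$ one has
\[
\int_\Omega \int_\Omega \frac{(u^+(x))^{p_\eps}(u^+(y))^{p_\eps}}{|x-y|^\mu}\di x\di y \le c\,\|u\|^{2p_\eps},
\]
which follows from \eqref{HLS} with $t=r=\tfrac{2N}{2N-\mu-\eps(N-2)}$ together with the Sobolev embedding $H_0^1(\Omega)\hookrightarrow L^{p_\eps t}(\Omega)$ (valid since $p_\eps t\le 2^*$ for $\eps\ge 0$); I would also note the matching lower bound on $\mathcal{N}_\eps$ coming from Lemma~\ref{Shlomeganotachiev}. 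For item~(iii), fix $u\neq 0$ and consider $\varphi(t):=G_\eps(tu)=t^2\|u\|_\lambda^2-t^{2p_\eps}\iint\frac{(u^+)^{p_\eps}(u^+)^{p_\eps}}{|x-y|^\mu}$; since $u^+\not\equiv 0$ (otherwise $I_\eps'(u)[u]>0$ forces $u=0$) and $2p_\eps>2$, the function $t\mapsto t^{-2}\varphi(t)$ is strictly decreasing from $\|u\|_\lambda^2>0$ to $-\infty$, giving the unique $t_\eps(u)>0$; the same computation shows $t\mapsto I_\eps(tu)$ is increasing on $(0,t_\eps(u))$, strictly decreasing afterwards, and attains its maximum exactly at $t_\eps(u)$, which yields the first variational characterisation in (iv), and the mountain-pass characterisation follows since every $g\in\Gamma_\eps$ must cross $\mathcal{N}_\eps$ (as $G_\eps(g(t))>0$ for small $t$ and $I_\eps(g(1))\le 0$ forces $G_\eps$ to change sign along the path).

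For item~(i), I would show $0$ is a regular value of $G_\eps$ restricted away from the origin: if $u\in\mathcal{N}_\eps$ then
\[
G_\eps'(u)[u]=2\|u\|_\lambda^2-2p_\eps\iint\frac{(u^+)^{p_\eps}(u^+)^{p_\eps}}{|x-y|^\mu}=2(1-p_\eps)\|u\|_\lambda^2<0
\]
using $G_\eps(u)=0$ and $p_\eps>1$, so $G_\eps'(u)\neq 0$ and $\mathcal{N}_\eps$ is a $C^1$ manifold (the $C^1$ regularity of $G_\eps$ itself comes from differentiability of the convolution term, standard for the HLS nonlinearity). Item~(ii) is immediate from the auxiliary estimate: on $\mathcal{N}_\eps$, $\|u\|_\lambda^2=\iint\frac{(u^+)^{p_\eps}(u^+)^{p_\eps}}{|x-y|^\mu}\le c\|u\|^{2p_\eps}$, and since $\|\cdot\|_\lambda$ is equivalent to $\|\cdot\|$ and $2p_\eps>2$, this gives a uniform lower bound $\|u\|\ge c>0$. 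Combined with \eqref{IsuNeps} this also yields $m_\eps\ge \frac{p_\eps-1}{2p_\eps}c^2>0$, completing (iv).

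Finally, for the attainment of $m_\eps$, take a minimising sequence $\{u_n\}\subset\mathcal{N}_\eps$; by \eqref{IsuNeps} it is bounded in $H_0^1(\Omega)$, so up to a subsequence $u_n\rightharpoonup u_\eps$ weakly, strongly in $L^{p_\eps t}(\Omega)$ by Rellich (here the subcriticality $p_\eps t<2^*$ for $\eps>0$ is what makes the embedding compact), hence the convolution term converges and $u_\eps\neq 0$ by item~(ii); then $G_\eps(u_\eps)\le 0$ by weak lower semicontinuity of the norm, so by item~(iii) there is $t_\eps(u_\eps)\in(0,1]$ with $t_\eps(u_\eps)u_\eps\in\mathcal{N}_\eps$, and
\[
m_\eps\le I_\eps(t_\eps(u_\eps)u_\eps)=\frac{p_\eps-1}{2p_\eps}t_\eps(u_\eps)^2\|u_\eps\|_\lambda^2\le \frac{p_\eps-1}{2p_\eps}\liminf_n\|u_n\|_\lambda^2=m_\eps,
\]
forcing $t_\eps(u_\eps)=1$ and $u_\eps\in\mathcal{N}_\eps$ with $I_\eps(u_\eps)=m_\eps$. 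The main obstacle is the compactness step: one must make sure that the slight subcriticality $\eps>0$ genuinely restores compactness of the relevant Sobolev embedding — at $\eps=0$ (the critical Choquard exponent) this fails, as Lemma~\ref{Shlomeganotachiev} records, so the argument is specific to $\eps>0$ and the dependence of the constants on $\eps$ should be tracked carefully for later use.
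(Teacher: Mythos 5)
Your overall route is the same as the paper's: the paper proves (ii) exactly as you do (Hardy--Littlewood--Sobolev plus the Sobolev embedding, giving $\|u\|^{2p_\eps-2}\ge 1/c$), proves uniqueness in (iii) by the same fibering computation, and dismisses (i), (iv) and the attainment of $m_\eps$ as ``standard''; you supply those standard details correctly (regular-value argument for (i), the mountain-pass characterisations for (iv), and the classical minimising-sequence argument with Rellich compactness for the attainment, where the strict subcriticality $p_\eps t<2^*$ for $\eps>0$ is indeed the point).

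One concrete slip: your choice of HLS exponent $t=r=\tfrac{2N}{2N-\mu-\eps(N-2)}$ does not satisfy the required condition $\tfrac1t+\tfrac{\mu}{N}+\tfrac1r=2$ unless $\eps=0$; with that $t$ Proposition~\ref{HardyLittS} simply does not apply. The correct choice is $t=r=\tfrac{2N}{2N-\mu}$, which gives
\[
p_\eps t \;=\; 2^*-\frac{2N\eps}{2N-\mu}\;<\;2^*\quad\text{for }\eps>0,
\]
so the Sobolev embedding is valid (and compact, which is what your attainment step actually needs). With this correction everything goes through. A smaller remark: your parenthetical justification that $u^+\not\equiv 0$ for every $u\neq 0$ is not right (take $u\le 0$); for such $u$ no $t_\eps(u)$ exists, so (iii) implicitly requires $u^+\neq 0$ --- but this imprecision is already present in the paper's statement and in its own uniqueness argument, which divides by the same double integral.
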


\begin{proof}
$(i)$ and $(iv)$ are standard.

As it concerns $(ii)$, for all $u\in \mathcal{N}_\eps$, we have that 
\begin{equation}
\|u\|_\lambda^2=\int_\Omega \int_\Omega \dfrac{(u^+(x))^{p_\eps}(u^+(y))^{p_\eps}}{|x-y|^\mu}\di x \di y,
\end{equation}
from which
\begin{equation}\label{3.4}
\|u\|^2\le c\|u\|_\lambda^2= c \int_\Omega \int_\Omega \dfrac{(u^+(x))^{p_\eps}(u^+(y))^{p_\eps}}{|x-y|^\mu}\di x \di y\le c |u|_{\Peps}^{2\Peps}\le c\|u\|^{2\Peps},
\end{equation}
where we used $\eqref{HLS}$, the equivalence of the norm $\|\cdot \|$ and $\| \cdot\|_\lambda$ and the Sobolev embedding. 
Then,
\[
\|u\|^{2\Peps-2}\ge \frac{1}{c}
\]
from which it results $(ii)$.

Finally we prove $(iii)$. Suppose that there exist $t_{\eps,1}(u)$, $t_{\eps,2}(u)>0$, $t_{\eps,1}(u)\neq t_{\eps,2}(u)$, such that $t_{\eps,1}(u) u \in \mathcal{N}_\eps$ and $t_{\eps,2}(u) u \in \mathcal{N}_\eps$. We have
\begin{equation}\label{3.5}
t_{\eps,1}^2(u)\|u\|_\lambda^2=t_{\eps,1}^{2\Peps}(u)\int_\Omega \int_\Omega \dfrac{(u^+(x))^{p_\eps}(u^+(y))^{p_\eps}}{|x-y|^\mu}\di x \di y.
\end{equation}
and
\begin{equation}\label{3.6}
t_{\eps,2}^2(u)\|u\|_\lambda^2=t_{\eps,2}^{2\Peps}(u)\int_\Omega \int_\Omega \dfrac{(u^+(x))^{p_\eps}(u^+(y))^{p_\eps}}{|x-y|^\mu}\di x \di y.
\end{equation}
Combining $\eqref{3.5}$ and $\eqref{3.6}$ we get
\[
t_{\eps,1}^{2-2\Peps}(u)\|u\|_\lambda^2=t_{\eps,2}^{2-2\Peps}(u)\|u\|_\lambda^2.
\]
and hence $t_{\eps,1}(u)=t_{\eps,2}(u)$.
\end{proof}

We have also the following result, whose proof is standard and will be omitted (see \cite{c} for a complete proof).

\begin{lemma}\label{Pswellbehaves}
The Nehari manifold $\mathcal{N}_\eps$ is a natural constraint for $I_\eps$.

Then, if $\{u_n\}_{n\in \mathbb{N}}\subset \Neps$ is a $(PS)$-sequence for $I_{\eps|_{\mathcal{N}_\eps}}$, it is a $(PS)$-sequence for the free functional $I_\eps$ on $H_0^1(\Omega)$.
\end{lemma}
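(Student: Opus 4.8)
The plan is to split the statement into two assertions and handle them in sequence. The first — that $\mathcal{N}_\eps$ is a natural constraint — amounts to showing that every critical point of the restricted functional $I_{\eps|_{\mathcal{N}_\eps}}$ is in fact a free critical point of $I_\eps$. By the Lagrange multiplier rule, if $u\in\mathcal{N}_\eps$ is a constrained critical point there is $\theta\in\mathbb{R}$ with $I_\eps'(u)=\theta\,G_\eps'(u)$. Pairing with $u$ and using $I_\eps'(u)[u]=G_\eps(u)=0$ on $\mathcal{N}_\eps$, we reduce to showing $G_\eps'(u)[u]\neq 0$. First I would compute, for $u\in\mathcal{N}_\eps$,
\[
G_\eps'(u)[u]=2\|u\|_\lambda^2-2p_\eps\int_\Omega\int_\Omega\frac{(u^+(x))^{p_\eps}(u^+(y))^{p_\eps}}{|x-y|^\mu}\di x\di y
=(2-2p_\eps)\|u\|_\lambda^2,
\]
where the last equality uses the Nehari identity $\|u\|_\lambda^2=\int_\Omega\int_\Omega\frac{(u^+(x))^{p_\eps}(u^+(y))^{p_\eps}}{|x-y|^\mu}\di x\di y$. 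Since $p_\eps>1$ and, by Lemma \ref{condizioninehari}(ii), $\|u\|_\lambda^2\ge c>0$, this quantity is bounded away from zero; hence $\theta=0$ and $I_\eps'(u)=0$. (This also re-confirms that $\mathcal{N}_\eps$ is a $C^1$ manifold with $0$ a regular value of $G_\eps$, giving a well-defined normal bundle.)

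For the second assertion, let $\{u_n\}\subset\mathcal{N}_\eps$ be a $(PS)$-sequence for $I_{\eps|_{\mathcal{N}_\eps}}$, so $I_\eps(u_n)$ is bounded and the differential of the restricted functional tends to zero in the cotangent space. Writing the free differential as its tangential part plus a multiple of $G_\eps'(u_n)$, namely $I_\eps'(u_n)=w_n+\theta_n G_\eps'(u_n)$ with $w_n\to 0$ in $H^{-1}$, I would evaluate at $u_n$: since $I_\eps'(u_n)[u_n]=0$ on $\mathcal{N}_\eps$ and $w_n[u_n]\to 0$ (using that $\{u_n\}$ is bounded in $H_0^1(\Omega)$, which follows from \eqref{IsuNeps} and boundedness of $I_\eps(u_n)$), we get $\theta_n G_\eps'(u_n)[u_n]\to 0$. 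By the computation above $G_\eps'(u_n)[u_n]=(2-2p_\eps)\|u_n\|_\lambda^2\le (2-2p_\eps)c<0$ is bounded away from zero, so $\theta_n\to 0$. Since $G_\eps'(u_n)$ is bounded in $H^{-1}$ (again by boundedness of $\{u_n\}$ together with the Hardy–Littlewood–Sobolev inequality \eqref{HLS} controlling the nonlinear term), $\theta_n G_\eps'(u_n)\to 0$, and therefore $I_\eps'(u_n)=w_n+\theta_n G_\eps'(u_n)\to 0$ in $H^{-1}$, i.e. $\{u_n\}$ is a $(PS)$-sequence for the free functional.

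The only genuinely delicate point is the uniform lower bound $|G_\eps'(u_n)[u_n]|\ge c>0$ along the sequence: it relies on Lemma \ref{condizioninehari}(ii), which in turn uses that $\eps$ is fixed (the constant degenerates as $\eps\to 2_\mu^*$, i.e. as $p_\eps\to 1$, but for fixed $\eps>0$ it is harmless). Everything else — boundedness of $\{u_n\}$, boundedness of $G_\eps'(u_n)$ in $H^{-1}$, and the orthogonal splitting of the differential — is routine, which is why the paper omits the details and refers to \cite{c}.
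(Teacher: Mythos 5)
Your proof is correct and is precisely the standard Lagrange-multiplier argument that the paper omits and delegates to \cite{c}: the key identity $G_\eps'(u)[u]=(2-2p_\eps)\|u\|_\lambda^2$ together with the uniform lower bound of Lemma \ref{condizioninehari}(ii) is exactly what makes the constraint natural and forces $\theta_n\to 0$ along a constrained $(PS)$-sequence. The only blemish is the parenthetical aside at the end: since $p_\eps=2_\mu^*-\eps$, the regime $p_\eps\to 1$ corresponds to $\eps\to 2_\mu^*-1$ (not $\eps\to 2_\mu^*$), and in the paper's regime $\eps\to 0$ the constant is in fact uniform — but this does not affect the argument.
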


From now on we will deal with the restricted functional on the Nehari manifold. For the sake of simplicity we will write $I_\eps$ instead of $I_{\eps|_{\mathcal{N}_\eps}}$.


\section{Two limit problems}\label{sec4}

The key result of this section is Proposition \ref{limitemeps} in which we compute $\lim_{\eps\rightarrow0}m_\eps$ by means of two limit cases of Problem $\eqref{P}$, 
which are here introduced.
First, we study the critical problem in $\setR$
\begin{equation}\label{PLRN}
\begin{cases}
-\Delta u=\Big(\displaystyle\int_\Omega\dfrac{(u)^{\Ds}(x)}{|x-y|^\mu}\di x \Big)(u)^{\Ds-2}u\qquad &\text{in}\; \setR \\
u>0\qquad &\text{in}\; \setR
\end{cases}
\end{equation}
and the corresponding functional $I_*:\mathcal{D}^{1,2}(\setR)\rightarrow \mathbb{R}$ defined as
\begin{equation}\label{LRN}
I_*(u):=\dfrac{1}{2}\int_\setR |\nabla u|^2\di x-\dfrac{1}{2 \cdot \Ds}\int_\setR \int_\setR \dfrac{(u^+(x))^{\Ds}(u^+(y))^{\Ds}}{|x-y|^\mu}\di x \di y.
\end{equation}
We call $\mathcal{N}_*$ the Nehari manifold associated to $I_*$ given by
\begin{equation}
\begin{aligned}
\mathcal{N}_*&:=\{u\in \mathcal{D}^{1,2}(\setR): (I_*)' (u)[u]=0 \}\\
&=\left \{u\in \mathcal{D}^{1,2}(\setR): \int_{\mathbb{R}^N}|\nabla u|^2 \di x=\int_\setR \int_\setR \dfrac{(u^+(x))^{\Ds}(u^+(y))^{\Ds}}{|x-y|^\mu}\di x \di y\right \}.
\end{aligned}
\end{equation}
We compute the value of $m_*:=\inf_{\mathcal{N}_*}I_*$ in the following lemma:

\begin{lemma}\label{mstarvalue}
It results
\[
m_*=\Big(\dfrac{\Ds-1}{2\cdot \Ds}\Big)S_{H,L}^{\frac{\Ds}{\Ds-1}},
\]
where $S_{H,L}$ (defined in $\eqref{SHL}$) is the best constant. Moreover  $m_*$ is achieved by functions $U_{R,a}$ (defined in $\eqref{eqHLS}$).
\end{lemma}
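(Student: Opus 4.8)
The plan is to reduce the computation of $m_*$ to the already-known value of the best constant $S_{H,L}$ by exploiting the Nehari characterization and the homogeneity of $I_*$. First I would observe that on $\mathcal{N}_*$ the functional simplifies exactly as in $\eqref{IsuNeps}$: for $u\in\mathcal{N}_*$ one has $\int_{\setR}|\nabla u|^2\,\di x = \int_{\setR}\int_{\setR}\frac{(u^+)^{\Ds}(u^+)^{\Ds}}{|x-y|^\mu}\,\di x\,\di y$, hence
\[
I_*(u) = \Bigl(\tfrac12 - \tfrac1{2\cdot\Ds}\Bigr)\int_{\setR}|\nabla u|^2\,\di x = \frac{\Ds-1}{2\cdot\Ds}\,\|u\|_{\mathcal{D}^{1,2}}^2.
\]
So $m_*$ equals $\frac{\Ds-1}{2\cdot\Ds}$ times the infimum of $\int|\nabla u|^2$ over $\mathcal{N}_*$.

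Next I would compute $\inf_{\mathcal{N}_*}\int|\nabla u|^2$. Given any $u\in\mathcal{D}^{1,2}(\setR)\setminus\{0\}$ with $u^+\not\equiv 0$, there is a unique $t(u)>0$ with $t(u)u\in\mathcal{N}_*$ — the same fibering argument as in Lemma~\ref{condizioninehari}(iii), using that $\Ds>1$ — and one solves explicitly
\[
t(u)^{2\Ds-2} = \frac{\int_{\setR}|\nabla u|^2\,\di x}{\displaystyle\int_{\setR}\int_{\setR}\frac{(u^+(x))^{\Ds}(u^+(y))^{\Ds}}{|x-y|^\mu}\,\di x\,\di y}.
\]
Substituting, $t(u)^2\|u\|_{\mathcal{D}^{1,2}}^2$ becomes, after algebra, a ratio that is exactly the Rayleigh quotient in $\eqref{SHL}$ raised to the power $\frac{\Ds}{\Ds-1}$. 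Taking the infimum over all such $u$ and noting the sign restriction can be removed since replacing $u$ by $|u|$ does not increase $\int|\nabla u|^2$ while leaving the double integral unchanged, I get
\[
\inf_{\mathcal{N}_*}\int_{\setR}|\nabla u|^2\,\di x = S_{H,L}^{\frac{\Ds}{\Ds-1}},
\]
which yields the claimed formula for $m_*$.

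For the "moreover" part, I would invoke Proposition~\ref{HardyLittS}: the infimum defining $S_{H,L}$ is attained precisely at the functions $U_{R,a}$ of $\eqref{eqHLS}$. Since the attainment of $S_{H,L}$ transfers, via the fibering map above, to attainment of $m_*$ on $\mathcal{N}_*$, the minimizers of $m_*$ are (positive multiples of) the $U_{R,a}$, which one checks indeed lie on $\mathcal{N}_*$ after the appropriate rescaling; positivity follows since $U_{R,a}>0$, so these are genuine solutions of $\eqref{PLRN}$.

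The only mildly delicate point is bookkeeping the exponents: one must be careful that the Hardy--Littlewood--Sobolev inequality is being applied with $t=r=\frac{2N}{2N-\mu}$ so that $\frac1t+\frac{\mu}{N}+\frac1r=2$ holds, and that $|u^+|_{\Ds\cdot t}^{2\Ds} = |u^+|_{2^*}^{2\Ds}$ matches the Sobolev exponent — this is what makes the power $\frac{\Ds}{\Ds-1}$ (equivalently $\frac{2N-\mu}{N-2+\mu}$) come out correctly and forces equality in HLS to coincide with the extremals $U_{R,a}$. No genuine obstacle is expected; the result is the nonlocal analogue of the classical fact that the mountain-pass level for the critical problem equals $\frac1N S^{N/2}$.
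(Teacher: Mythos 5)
Your argument is correct and essentially the same as the paper's: both compute $m_*=\inf_{u\neq 0}\max_{t>0}I_*(tu)$ by the explicit fibering along rays (the paper records the value of $\max_{t>0}\bigl\{\tfrac{t^2}{2}A-\tfrac{t^{2\cdot\Ds}}{2\cdot\Ds}B\bigr\}$, you record the equivalent formula for $t(u)^{2\cdot\Ds-2}$ and substitute), obtaining the lower bound $\bigl(\tfrac{\Ds-1}{2\cdot\Ds}\bigr)S_{H,L}^{\Ds/(\Ds-1)}$, and then close with $U_{R,a}\in\mathcal{N}_*$ for the matching upper bound. One cosmetic slip: passing from $u$ to $|u|$ leaves $\int|\nabla u|^2$ unchanged but the double integral built from $u^+$ can only \emph{increase} (since $|u|\ge u^+$), not stay unchanged; what you actually need, and what does hold, is that the quotient with $(u^+)^{\Ds}$ in the denominator dominates the Rayleigh quotient of $\eqref{SHL}$, so the lower bound still follows.
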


\begin{proof}
First of all we prove that if $A$, $B>0$, we have
\begin{equation*}
\max_{t>0} \Big\{\dfrac{t^2}{2} A-\dfrac{t^{2\cdot \Ds}}{2\cdot \Ds}B\Big\}=\Big(\dfrac{\Ds-1}{2\cdot \Ds}\Big)\Biggl( \dfrac{A^{\Ds}}{B}\Biggr)^{\frac{1}{\Ds-1}}.
\end{equation*}

Then,
\begin{equation}
m_*=\inf_{u\neq 0} \max_{t_*>0} I_*(t_*u)\ge \Big(\dfrac{\Ds-1}{2\cdot \Ds}\Big)S_{H,L}^{\frac{\Ds}{\Ds-1}}.
\end{equation}
Additionally, since functions $U_{R,a}$ satisfy $\eqref{PLRN}$, $U_{R,a} \in \mathcal{N}_*$, and we get
\[
m_*\le I_*(U_{R,a})=\Big(\frac{1}{2}-\frac{1}{2\cdot \Ds} \Big)\int_{\mathbb{R}^N}|\nabla U_{R,a}|^2\di x=\Big(\dfrac{\Ds-1}{2\cdot \Ds}\Big)S_{H,L}^{\frac{\Ds}{\Ds-1}}.
\]
\end{proof}
The minimizers for $m_*$, i.e.\ the bubble functions, will be the model functions to construct approximating sequences for $\{m_\eps\}_{\eps>0}$.

For $R>1$ and $x_0\in \Omega_r^-$ we define
\begin{equation}\label{ur}
u_{R,x_0}(\cdot):=R^{\frac{N-2}{2}}U_1(R(\cdot-x_0))\chi_{B_r(x_0)}(\cdot)=U_{R,x_0}(\cdot)\chi_{B_r(x_0)}(\cdot),
\end{equation}
where $U_1$ is the standard bubble function (defined in $\eqref{eqHLS}$ with $R=1$ and $a=0$) and $\chi_{B_r(x_0)}$ is the cut-off function defined as
\begin{equation}\label{cutoff}\chi_{B_r(x_0)}(x):=
\begin{cases}
0\qquad &\text{if}\; |x-x_0|>r\\
1\qquad &\text{if}\; |x-x_0|<\frac{r}{2}.
\end{cases}
\end{equation}
Notice that if $x_0 \in  \Omega_r^-$, then $u_{R,x_0} \in H_0^1(\Omega)$.

It is easy to see that $u_R$ satisfies
\begin{equation}\label{gradconserva}
\int_{\Omega}|\nabla u_{R,x_0}|_2^2\di x= \int_{\setR}|\nabla U_{1}|_2^2\di x +o_R(1)
\end{equation}
\begin{equation}\label{intdoppioconserva}
\int_{\Omega} \int_{\Omega} \dfrac{|u_{R,x_0}(x)|^{\Ds}|u_{R,x_0}(y)|^{\Ds}}{|x-y|^\mu}\di x \di y = \int_{\setR}\int_{\setR} \dfrac{|U_1(x)|^{\Ds}|U_1(y)|^{\Ds}}{|x-y|^\mu}\di x \di y+o_R(1),
\end{equation}
\begin{equation}\label{norma2zero}
|u_{R,x_0}|^2_{L^2(\Omega)}=\frac1{R^2}|u_{R,x_0}|^2_{L^2(\setR)}+o_R(1)=o_R(1).
\end{equation}
Here $o_R(1)\rightarrow 0$ as $R\rightarrow +\infty$.

\begin{lemma}\label{m*sup}
It holds
\begin{equation*}
\limsup_{\eps \rightarrow 0} m_\eps \le m_*.
\end{equation*}
\end{lemma}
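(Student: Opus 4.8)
**Proof plan for Lemma \ref{m*sup}.**

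The plan is to use the truncated bubbles $u_{R,x_0}$ from \eqref{ur} as test functions on the Nehari manifold $\mathcal{N}_\eps$, then send $R\to+\infty$ and $\eps\to 0$ in the right order. Fix $x_0\in\Omega_r^-$ so that $u_{R,x_0}\in H^1_0(\Omega)$, and note $u_{R,x_0}\ge 0$. By Lemma \ref{condizioninehari}(iii) there is a unique $t_\eps=t_\eps(u_{R,x_0})>0$ with $t_\eps u_{R,x_0}\in\mathcal{N}_\eps$, and by \eqref{IsuNeps} together with the characterization in Lemma \ref{condizioninehari}(iv),
\[
m_\eps\le I_\eps(t_\eps u_{R,x_0})=\max_{t>0}I_\eps(t\,u_{R,x_0}).
\]
So the whole task reduces to estimating this one-dimensional maximum. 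Writing $A_R:=\|u_{R,x_0}\|_\lambda^2$ and $B_{R,\eps}:=\int_\Omega\int_\Omega \frac{(u_{R,x_0}(x))^{\Peps}(u_{R,x_0}(y))^{\Peps}}{|x-y|^\mu}\di x\di y$, the function $t\mapsto \frac{t^2}{2}A_R-\frac{t^{2\Peps}}{2\Peps}B_{R,\eps}$ has maximum $\left(\frac{\Peps-1}{2\Peps}\right)\bigl(A_R^{\Peps}/B_{R,\eps}\bigr)^{1/(\Peps-1)}$, exactly as in the computation opening the proof of Lemma \ref{mstarvalue}.

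Next I would control the two ingredients $A_R$ and $B_{R,\eps}$. For $A_R$, combine \eqref{gradconserva} and \eqref{norma2zero}: since $\lambda\ge 0$ is fixed, $A_R=\int_\setR|\nabla U_1|^2\di x+o_R(1)=S_{H,L}^{\Ds/(\Ds-1)}+o_R(1)$ (using that $U_1\in\mathcal{N}_*$ and Lemma \ref{mstarvalue}), uniformly in $\eps$. For $B_{R,\eps}$ the point is continuity in the exponent: $u_{R,x_0}$ is a fixed (for fixed $R$) bounded function with compact support in $\Omega$, so $\Peps=\Ds-\eps\to\Ds$ forces $B_{R,\eps}\to B_{R,0}$ as $\epz$ by dominated convergence in the Hardy–Littlewood–Sobolev double integral (Proposition \ref{HardyLittS} gives the needed majorant), and \eqref{intdoppioconserva} gives $B_{R,0}=\int_\setR\int_\setR\frac{|U_1(x)|^{\Ds}|U_1(y)|^{\Ds}}{|x-y|^\mu}\di x\di y+o_R(1)$. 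Since $U_1\in\mathcal{N}_*$, this last integral equals $\int_\setR|\nabla U_1|^2\di x=S_{H,L}^{\Ds/(\Ds-1)}$ as well. Also the prefactor $\frac{\Peps-1}{2\Peps}\to\frac{\Ds-1}{2\Ds}$ as $\epz$. Plugging all of this in, for each fixed $R$,
\[
\limsup_{\epz} m_\eps\le \limsup_{\epz}\Bigl(\tfrac{\Peps-1}{2\Peps}\Bigr)\Bigl(\tfrac{A_R^{\Peps}}{B_{R,\eps}}\Bigr)^{\frac{1}{\Peps-1}}
=\Bigl(\tfrac{\Ds-1}{2\Ds}\Bigr)\bigl(S_{H,L}^{\Ds/(\Ds-1)}\bigr)^{\frac{1}{\Ds-1}}\cdot\bigl(1+o_R(1)\bigr)\cdot\frac{1}{1+o_R(1)},
\]
and letting $R\to+\infty$ the error terms vanish and the right side becomes $\bigl(\tfrac{\Ds-1}{2\Ds}\bigr)S_{H,L}^{\Ds/(\Ds-1)}=m_*$ by Lemma \ref{mstarvalue}.

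The main obstacle is the interchange of the limits $\epz$ and $R\to+\infty$: one cannot naively set $\eps=0$ because $u_{R,x_0}$ is not a critical point of $I_0$ (and $S_{H,L}^\Omega$ is not achieved, by Lemma \ref{Shlomeganotachiev}), so the estimate must be run with $R$ fixed first, taking $\limsup_{\epz}$, and only afterwards sending $R\to+\infty$. The delicate technical point inside this is the uniformity in $\eps$ of the bounds: one needs that $|u_{R,x_0}^{\Peps}|$ is dominated uniformly for $\eps$ in a neighborhood of $0$, which is immediate since $u_{R,x_0}$ is bounded with fixed compact support, so $u_{R,x_0}^{\Peps}\le \max\{1,\|u_{R,x_0}\|_\infty^{\Ds}\}$ for all small $\eps$; then the HLS majorant is $\eps$-independent and dominated convergence applies. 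Everything else is the routine one-variable optimization and the already-recorded asymptotics \eqref{gradconserva}–\eqref{norma2zero}.
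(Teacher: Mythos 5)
Your proposal is correct and follows essentially the same route as the paper's proof: truncated bubbles $u_{R,x_0}$ projected onto $\mathcal{N}_\eps$, the asymptotics \eqref{gradconserva}--\eqref{norma2zero} together with continuity in the exponent $p_\eps$, and the order of limits $\eps\to 0$ first (fixed $R$), then $R\to\infty$. The only cosmetic difference is that you spell out the one-dimensional maximum $\bigl(\tfrac{p_\eps-1}{2p_\eps}\bigr)(A_R^{p_\eps}/B_{R,\eps})^{1/(p_\eps-1)}$ in closed form, whereas the paper works directly with $t_\eps(u_{R,x_0})\to 1+o_R(1)$ and the identity $I_\eps|_{\mathcal{N}_\eps}=\tfrac{p_\eps-1}{2p_\eps}\|\cdot\|_\lambda^2$; the two computations are equivalent.
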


\begin{proof}
Let $u_{R,x_0}$ be the function defined in $\eqref{ur}$. For any $\eps>0$, consider the unique $t_\eps (u_{R,x_0})>0$ such that 
$t_\eps (u_{R,x_0}) u_{R,x_0}\in \mathcal{N}_\eps$. Then
\begin{equation}\label{4B}
\|t_\eps (u_{R,x_0}) u_{R,x_0}\|_\lambda^2=t_\eps^{2\Peps} (u_{R,x_0})\int_{\Omega} \int_{\Omega} \dfrac{|u_{R,x_0}(x)|^{\Peps}|u_{R,x_0}(y)|^{\Peps}}{|x-y|^\mu}\di x \di y,
\end{equation}
from which
\begin{equation}\label{t1}
t_\eps^{2\Peps-2} (u_{R,x_0})=\frac{\|u_{R,x_0}\|_\lambda^2}{\displaystyle \int_{\Omega}\int_{\Omega}\dfrac{|u_{R,x_0}(x)|^{p_\eps}|u_{R,x_0}(y)|^{p_\eps}}{|x-y|^\mu}\di x \di y}.
\end{equation}
Recalling $\eqref{gradconserva}$, $\eqref{norma2zero}$ and $\eqref{intdoppioconserva}$, since the map $\Peps\mapsto \displaystyle \int_{\Omega}\int_{\Omega}\dfrac{|u_{R,x_0}(x)|^{\Peps}|u_{R,x_0}(y)|^{\Peps}}{|x-y|^\mu}\di x \di y$
is continuous, passing to the limit in $\eqref{t1}$, we obtain
\begin{equation}\label{4D}
\lim_{\eps \rightarrow 0} t_\eps (u_{R,x_0})=\left(
\frac{|\nabla U_1|_{L^2(\setR)}^2+o_R(1)}
{\displaystyle \int_{\mathbb{R}^N}\int_{\mathbb{R}^N}\dfrac{|U_1(x)|^{2_\mu^*}|U_1(y)|^{2_\mu^*}}{|x-y|^\mu}\di x \di y+o_R(1)}
\right)^\frac1{2\Peps-2}=1+o_R(1).
\end{equation}
Therefore
\begin{equation}
I_\eps(t_\eps (u_{R,x_0}) u_{R,x_0})=\dfrac{\Peps-1}{2\Peps}\|t_\eps (u_{R,x_0}) u_{R,x_0}\|_\lambda^2=t_\eps^2 (u_{R,x_0})\dfrac{\Peps-1}{2\Peps}|\nabla U_1|_{L^2(\setR)}^2+o_R(1)
\end{equation}
and passing to the limit for $\eps \rightarrow 0$, in light of $\eqref{4D}$, we have
\begin{equation}\label{4.14}
\lim_{\eps \rightarrow 0} I_\eps(t_\eps (u_{R,x_0}) u_{R,x_0})=\dfrac{\Ds-1}{2\cdot \Ds}|\nabla U_1|_{L^2(\setR)}^2+o_R(1).
\end{equation}
For $\delta>0$ small, we choose $R$ such that $o_R(1)<\delta$,
using $\eqref{4.14}$  and Lemma \ref{mstarvalue} we get
\begin{equation}\label{I_*<m*}
\limsup_{\eps \rightarrow 0}m_\eps \le \lim_{\eps \rightarrow 0} I_\eps(t_\eps (u_{R,x_0}) u_{R,x_0})<
\dfrac{\Ds-1}{2\cdot \Ds}|\nabla U_1|_{L^2(\setR)}^2+\delta=m_*+\delta.
\end{equation}
This completes the proof since $\delta$ is arbitrary.
\end{proof}

\begin{remark}\label{graundbdd}
Observe that, since $\{m_\eps\}_{\eps>0}$ is uniformly bounded in $\eps$, the groundstates are bounded too. Infact
\begin{equation}
\|u_\eps\|_\lambda^2=\frac{2\Peps}{\Peps-1}I_\eps(u_\eps)=\frac{2\Peps}{\Peps-1}m_\eps.
\end{equation}
\end{remark}

Now we introduce a limit problem which acts as mediator between Problem $\eqref{P}$ and Problem $\eqref{PLRN}$. It will have a crucial role in the computation of the limit of $m_\eps$. We consider
\begin{equation}\label{Prob}
\begin{cases}
-\Delta u+ \lambda u=\Big(\displaystyle\int_\Omega  \frac{(u(x))^{\Ds}}{|x-y|^\mu}\di x\Big)(u)^{\Ds-2}u\quad &\text{ in }\;\Omega\\
u>0\quad &\text{ in }\; \Omega ;\\
u=0\quad &\text{ in }\; \mathbb{R}^N \setminus \Omega;
\end{cases}
\end{equation}
its solutions are critical points of the functional $I_*^\Omega:H_0^1(\Omega)\rightarrow \mathbb{R}$ defined as
\begin{equation}\label{L}
I_*^\Omega(u):=\dfrac{1}{2}\int_\Omega |\nabla u|^2\di x+\frac{\lambda}{2}\int_\Omega |u|^2 \di x-\dfrac{1}{2 \cdot \Ds}\int_\Omega \int_\Omega \dfrac{(u^+(x))^{\Ds}(u^+(y))^{\Ds}}{|x-y|^\mu}\di x \di y.
\end{equation}

As usual, the Nehari manifold associated to functional $\eqref{L}$ is
\[
\mathcal{N}_*^\Omega:= \{u\in H_0^1(\Omega) \setminus \{0\}:G_*^\Omega(u)=0\}
\]
where
\[
G_*^\Omega(u):=(I_*^\Omega(u))'[u]=\|u\|_\lambda^2-\int_\Omega \int_\Omega \dfrac{(u^+(x))^{\Ds}(u^+(y))^{\Ds}}{|x-y|^\mu}\di x \di y.
\]
For $u\in \mathcal{N}_*^\Omega$ we have that 
\begin{equation}\label{functonnehariman}
I_*^\Omega(u)=\Big(\frac{1}{2}-\frac{1}{2\cdot \Ds}\Big) \|u\|_\lambda^2
\end{equation}
and $m_*^\Omega :=\inf_{\mathcal{N}_*^\Omega}I_*^\Omega$.
\begin{lemma}\label{m*nonraggiunto}
Note that $m_*^\Omega=m_*$ and  $m_*^\Omega$ is not achieved.
\end{lemma}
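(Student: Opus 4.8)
**Proof plan for Lemma \ref{m*nonraggiunto}.**

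The plan is to prove the two statements separately, relying on Lemma \ref{Shlomeganotachiev} (that $S_{H,L}^\Omega = S_{H,L}$ is never achieved on a proper domain) and on Lemma \ref{m*sup}/Lemma \ref{mstarvalue}.

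\emph{Step 1: $m_*^\Omega \ge m_*$.} Since $\Omega$ is bounded, $H_0^1(\Omega) \subset \mathcal{D}^{1,2}(\setR)$ via extension by zero, and any $u \in \mathcal{N}_*^\Omega$ can be rescaled by a unique $t_*(u)>0$ so that $t_*(u)u \in \mathcal{N}_*$. First I would establish the elementary identity $\max_{t>0}\{\tfrac{t^2}{2}A - \tfrac{t^{2\Ds}}{2\Ds}B\} = (\tfrac{\Ds-1}{2\Ds})(A^{\Ds}/B)^{1/(\Ds-1)}$ already used in Lemma \ref{mstarvalue}, applied here with $A = \|u\|_\lambda^2$ and $B$ the double integral. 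Because $\|u\|_\lambda^2 \ge |\nabla u|_2^2$ (as $\lambda \ge 0$) and the double integral over $\Omega$ equals the double integral over $\setR$ for the zero-extended $u$, the mountain-pass value of $I_*^\Omega$ along $\R u$ dominates that of $I_*$ along $\R u$, so passing to the infimum over $u \ne 0$ and using the characterization $m_* = \inf_{u\ne 0}\max_{t>0} I_*(tu)$ gives $m_*^\Omega \ge m_*$.

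\emph{Step 2: $m_*^\Omega \le m_*$.} For this I would use the truncated bubbles $u_{R,x_0}$ from \eqref{ur}. By \eqref{gradconserva}, \eqref{intdoppioconserva}, \eqref{norma2zero}, one has $\|u_{R,x_0}\|_\lambda^2 = |\nabla U_1|_2^2 + o_R(1)$ and the double integral converges to its $\setR$ counterpart; projecting onto $\mathcal{N}_*^\Omega$ by the scalar $t_*(u_{R,x_0}) = 1 + o_R(1)$ (same computation as \eqref{4D}) yields $I_*^\Omega(t_*(u_{R,x_0})u_{R,x_0}) = \tfrac{\Ds-1}{2\Ds}|\nabla U_1|_2^2 + o_R(1) = m_* + o_R(1)$ by Lemma \ref{mstarvalue}. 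Letting $R \to +\infty$ gives $m_*^\Omega \le m_*$, hence equality.

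\emph{Step 3: $m_*^\Omega$ is not achieved.} Suppose $u \in \mathcal{N}_*^\Omega$ realizes $m_*^\Omega = m_*$. Then by \eqref{functonnehariman} and $\lambda \ge 0$,
\begin{equation*}
m_* = \Big(\tfrac12 - \tfrac1{2\Ds}\Big)\|u\|_\lambda^2 \ge \Big(\tfrac12 - \tfrac1{2\Ds}\Big)|\nabla u|_2^2
= \Big(\tfrac12 - \tfrac1{2\Ds}\Big)\bigg(\int_\Omega\!\int_\Omega \tfrac{(u^+)^{\Ds}(x)(u^+)^{\Ds}(y)}{|x-y|^\mu}\di x\di y\bigg)^{\!\!\frac{\Ds}{\Ds-1}}\!\!\!\!\big(|\nabla u|_2^{-2}\big)^{\frac{1}{\Ds-1}}\! \cdot (\cdots)
\end{equation*}
— more cleanly: combining $\|u\|_\lambda^2 \ge |\nabla u|_2^2 \ge S_{H,L}^\Omega (\iint \cdots)^{(N-2)/(2N-\mu)} = S_{H,L}(\iint\cdots)^{1/\Ds}$ with the Nehari identity $\|u\|_\lambda^2 = \iint \cdots$ forces $\iint\cdots \ge S_{H,L}^{\Ds/(\Ds-1)}$, and then the value $\big(\tfrac12-\tfrac1{2\Ds}\big)\|u\|_\lambda^2 = m_* = \big(\tfrac{\Ds-1}{2\Ds}\big)S_{H,L}^{\Ds/(\Ds-1)}$ pins down $\|u\|_\lambda^2 = S_{H,L}^{\Ds/(\Ds-1)} = |\nabla u|_2^2$, so $\lambda|u|_2^2 = 0$, i.e. $u = 0$ if $\lambda > 0$ (contradiction), and in any case $|\nabla u|_2^2 = S_{H,L}(\iint\cdots)^{1/\Ds}$, meaning $u$ achieves $S_{H,L}^\Omega$ on the proper domain $\Omega$, contradicting Lemma \ref{Shlomeganotachiev}. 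I expect the main obstacle to be purely bookkeeping: keeping the Hardy–Littlewood–Sobolev exponents $\frac{N-2}{2N-\mu} = \frac{1}{\Ds}$ straight and making sure the strict inequality in "never achieved" propagates correctly through the rescaling, so that no minimizing configuration survives; everything else is a concatenation of the already-proven facts.
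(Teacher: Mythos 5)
Your proposal is correct. Steps 1 and 2 (the equality $m_*^\Omega=m_*$) follow the paper's argument essentially verbatim: the inequality $m_*\le m_*^\Omega$ via zero extension and $\|u\|_\lambda\ge\|u\|_{\mathcal D^{1,2}}$, and the reverse inequality via the truncated bubbles $u_{R,x_0}$ as in Lemma \ref{m*sup}. Step 3 is where you genuinely diverge. The paper takes a putative minimizer $v$, extends it by zero to $\bar v$, projects onto $\mathcal N_*$ with the scalar $t_*(\bar v)$, and splits into cases: if $\lambda>0$ then $t_*(\bar v)<1$ gives $I_*(t_*(\bar v)\bar v)<m_*$, a contradiction; if $\lambda=0$ then $\bar v\in\mathcal N_*$ is a minimizer for $m_*$, hence solves the limit problem on $\setR$, and the strong maximum principle contradicts $\bar v\equiv 0$ outside $\Omega$. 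You instead close the chain
\[
S_{H,L}^{\Ds/(\Ds-1)}=\|u\|_\lambda^2\ \ge\ |\nabla u|_2^2\ \ge\ S_{H,L}\Bigl(\textstyle\iint\frac{|u|^{\Ds}|u|^{\Ds}}{|x-y|^\mu}\Bigr)^{1/\Ds}\ \ge\ S_{H,L}\bigl(\|u\|_\lambda^2\bigr)^{1/\Ds}=S_{H,L}^{\Ds/(\Ds-1)},
\]
so every inequality is an equality and $u$ would achieve $S_{H,L}^\Omega$ on the bounded domain $\Omega$, contradicting Lemma \ref{Shlomeganotachiev}. This is a legitimate and arguably cleaner route: it handles $\lambda>0$ and $\lambda=0$ uniformly, avoids the maximum-principle step (which in the paper is stated somewhat loosely), and simply outsources the rigidity to the already-stated non-attainment of $S_{H,L}^\Omega$; the price is that you lean entirely on Lemma \ref{Shlomeganotachiev}, whose proof in the cited reference ultimately rests on the same positivity-of-extremals fact the paper invokes directly. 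One cosmetic point: your first display in Step 3 is garbled, but the subsequent ``more cleanly'' version is the correct argument, and you should keep the observation that equality also forces $u^-=0$ a.e.\ so that the Nehari double integral coincides with the one in the definition of $S_{H,L}^\Omega$.
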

\begin{proof}
Obviously, $m_*\le m_*^\Omega$. Moreover, for all $x_0\in \Omega_r^-$ and $R>0$, we take $u_{R,x_0}$ defined as in  $\eqref{ur}$ and $t_*^\Omega(u_{R,x_0})>0$ be the unique value such that $t_*^\Omega(u_{R,x_0})u_{R,x_0}\in \mathcal{N}_*^\Omega$. Proceeding as in Lemma \ref{m*sup}, we have that for every $\delta>0$ there exists $R>0$ such that
\[
m_*^\Omega \le I_*^\Omega(t_*^\Omega(u_{R,x_0})u_{R,x_0})<m_*+\delta.
\]
For the arbitrariness of $\delta$ we get $m_*^\Omega \le m_*$. Hence $m_*=m_*^\Omega$. 

We show that $m_*^\Omega$ is never a minimum. Indeed, suppose by contradiction that $v\in~H_0^1(\Omega)$ is such that $I_*^\Omega(v)=m_*^\Omega$. Then
\begin{equation}
I_*^\Omega(v)=\Big(\frac{1}{2}-\frac{1}{2\cdot \Ds} \Big)\int_\Omega \int_\Omega \dfrac{(v^+(x))^{\Ds}(v^+(y))^{\Ds}}{|x-y|^\mu}\di x \di y=m_*^\Omega.
\end{equation}

We extend $v$ to zero outside $\Omega$ and we call this extension $\bar{v}$. Let $t_*(\bar{v})>0$ the unique value such that $t_*(\bar{v})\bar{v}\in \mathcal{N}_*$. Since $v\in \mathcal{N}_*^\Omega$, it results
\begin{equation}
\begin{aligned}
&t_*^2(\bar{v})\|\bar{v}\|_{\mathcal{D}^{1,2}(\mathbb{R}^N)}^2=t_*^{2\cdot \Ds}(\bar{v})\int_\setR \int_\setR \dfrac{(\bar{v}^+(x))^{\Ds}(\bar{v}^+(y))^{\Ds}}{|x-y|^\mu}\di x \di y\\
&=t_*^{2\cdot \Ds}(\bar{v})\int_\Omega \int_\Omega \dfrac{(v^+(x))^{\Ds}(v^+(y))^{\Ds}}{|x-y|^\mu}\di x \di y=t_*^{2\cdot \Ds}(\bar{v})\|v\|_\lambda^2\\
&=t_*^{2\cdot \Ds}(\bar{v})\|\bar{v}\|_\lambda^2,
\end{aligned}
\end{equation}
from which, if $\lambda>0$, we get
\begin{equation}
t_*(\bar{v})=\Biggr( \frac{\|\bar{v}\|_{\mathcal{D}^{1,2}(\mathbb{R}^N)}^2}{\|\bar{v}\|_\lambda^2}\Biggr)^{1/(2\cdot \Ds -2)}<1.
\end{equation}
Hence, computing
\begin{equation}
\begin{aligned}
I_*(t_*(\bar{v})\bar{v})&=\Big( \frac{1}{2}-\frac{1}{2\cdot \Ds}\Big)t_*^{2\cdot \Ds}(\bar{v})\int_\setR \int_\setR \dfrac{(\bar{v}^+(x))^{\Ds}(\bar{v}^+(y))^{\Ds}}{|x-y|^\mu}\di x \di y\\
&=\Big( \frac{1}{2}-\frac{1}{2\cdot \Ds}\Big)t_*^{2\cdot \Ds}(\bar{v})\int_\Omega \int_\Omega \dfrac{(v^+(x))^{\Ds}(v^+(y))^{\Ds}}{|x-y|^\mu}\di x \di y\\
&<\Big( \frac{1}{2}-\frac{1}{2\cdot \Ds}\Big)\int_\Omega \int_\Omega \dfrac{(v^+(x))^{\Ds}(v^+(y))^{\Ds}}{|x-y|^\mu}\di x \di y=m_*^\Omega,
\end{aligned}
\end{equation}
we have a contradiction.

On the other hand, if $\lambda=0$, since $v\in \mathcal{N}_*^\Omega$ we get that $t_*(\bar{v})=1$, so $\bar{v}\in \mathcal{N}_*$. We recall from Proposition \ref{problemiequivalenti} that $v\ge 0$, hence $\bar{v}\ge 0$. Moreover $I_*(\bar{v})=m_*^\Omega=m_*=\min_{\mathcal{N}_*}I_*$. 
Thus $\bar{v}$ satisfies Problem $\eqref{Prob}$. By the Maximum Principle it follows that $\bar{v}>0$, but from its construction it is not.
\end{proof}

Now we prove the main result of this section:

\begin{prop}\label{limitemeps}
It holds
\[
\lim_{\eps \rightarrow 0} m_\eps=m_*.
\]
\end{prop}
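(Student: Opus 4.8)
The plan is to prove the two inequalities $\limsup_{\epz} m_\eps \le m_*$ and $\liminf_{\epz} m_\eps \ge m_*$. The first is exactly Lemma \ref{m*sup}, already established, so the work lies entirely in the lower bound. To this end I would argue by contradiction: suppose there is a sequence $\eps_n \to 0$ with $m_{\eps_n} \to \ell < m_*$, and let $u_n := u_{\eps_n} \in \mathcal{N}_{\eps_n}$ be the minimizers furnished by Lemma \ref{condizioninehari}. By Remark \ref{graundbdd} the sequence $\{u_n\}$ is bounded in $H_0^1(\Omega)$ (note $\tfrac{2p_{\eps_n}}{p_{\eps_n}-1} \to \tfrac{2\Ds}{\Ds-1}$), hence, up to a subsequence, $u_n \rightharpoonup u$ weakly in $H_0^1(\Omega)$, strongly in $L^2(\Omega)$, and a.e. I would also record that, since $u_n \in \mathcal{N}_{\eps_n}$, the double integrals $\int_\Omega\int_\Omega \frac{(u_n^+(x))^{p_{\eps_n}}(u_n^+(y))^{p_{\eps_n}}}{|x-y|^\mu}\di x\di y = \|u_n\|_\lambda^2$ are bounded and bounded away from $0$ by Lemma \ref{condizioninehari}(ii).

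The heart of the matter is a concentration-compactness dichotomy for the nonlocal term. I would show that the weak limit $u$ must be $0$. Indeed, if $u \ne 0$, then one checks (using a Brezis–Lieb type splitting for the Choquard nonlinearity, together with the fact that $p_{\eps_n} \to \Ds$ and the strong $L^2$ convergence so that the $\lambda$-term passes to the limit) that $u$ is a nontrivial solution of the limit problem \eqref{Prob}, so $I_*^\Omega(u) \ge m_*^\Omega = m_*$ by Lemma \ref{m*nonraggiunto}; combined with the nonnegativity of the remaining "bubble" mass this forces $\ell \ge m_*$, a contradiction. Hence $u = 0$. Then the $\lambda|u_n|_2^2$ term vanishes in the limit, so $|\nabla u_n|_2^2 = \int_\Omega\int_\Omega \frac{(u_n^+)^{p_{\eps_n}}(u_n^+)^{p_{\eps_n}}}{|x-y|^\mu} + o(1)$. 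Applying the Hardy–Littlewood–Sobolev inequality \eqref{HLS} and the Sobolev embedding as in \eqref{3.4}, together with the continuous dependence of the relevant constants on the exponent $p_{\eps_n}$ as $\eps_n\to 0$, I would deduce $|\nabla u_n|_2^2 \ge S_{H,L}^{\frac{\Ds}{\Ds-1}} + o(1)$, whence $m_{\eps_n} = \tfrac{p_{\eps_n}-1}{2p_{\eps_n}}|\nabla u_n|_2^2 + o(1) \ge \big(\tfrac{\Ds-1}{2\Ds}\big)S_{H,L}^{\frac{\Ds}{\Ds-1}} + o(1) = m_* + o(1)$, contradicting $\ell < m_*$.

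A technical point worth isolating is the careful handling of the varying exponent $p_{\eps_n} = \Ds - \eps_n$: one must control $|u_n^+|_{p_{\eps_n}}^{p_{\eps_n}}$ and the HLS constant $C(t,N,\mu,r)$ with $t = r = \tfrac{2N}{2N-\mu}\cdot\tfrac{p_{\eps_n}}{\Ds}$ uniformly, which follows since $\Omega$ is bounded (so $L^{q}(\Omega) \hookrightarrow L^{q'}(\Omega)$ for $q \ge q'$) and $C(t,N,\mu,r)$ depends continuously on its arguments. I expect the main obstacle to be making the Brezis–Lieb splitting for the Choquard term rigorous in the regime where the exponent itself is converging, i.e. justifying that the "lost mass" at infinity (or concentrated at a point) still contributes a quantity $\ge m_*$ in the limit — this is where Lemma \ref{Shlomeganotachiev} (non-attainment of $S_{H,L}^\Omega$) and Lemma \ref{m*nonraggiunto} are the essential inputs ruling out the alternative that $u$ is a nontrivial limit with $I_*^\Omega(u) < m_*$. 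Once $u=0$ is forced, the rest is a routine HLS-plus-Sobolev estimate.
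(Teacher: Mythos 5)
Your proposal is correct in outline but takes a genuinely different, and heavier, route than the paper. The paper's proof of the lower bound is a direct projection argument with no compactness analysis at all: it takes the minimizer $u_\eps\in\Neps$, computes the unique $t_*^\Omega(u_\eps)$ projecting it onto the critical Nehari manifold $\mathcal{N}_*^\Omega$, and shows via the H\"older interpolation
$\int_\Omega\int_\Omega\frac{(u_\eps^+)^{\Peps}(u_\eps^+)^{\Peps}}{|x-y|^\mu}\le\bigl(\int_\Omega\int_\Omega\frac{(u_\eps^+)^{\Ds}(u_\eps^+)^{\Ds}}{|x-y|^\mu}\bigr)^{\Peps/\Ds}C(\Omega)^{\eps/\Ds}$
that $t_*^\Omega(u_\eps)\le 1+o(1)$; then $m_*=m_*^\Omega\le I_*^\Omega(t_*^\Omega(u_\eps)u_\eps)\le(1+o(1))m_\eps$, using Lemma \ref{m*nonraggiunto} only for the identity $m_*^\Omega=m_*$. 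Your contradiction argument via the dichotomy on the weak limit also works, but note two things. First, in the branch $u\neq 0$ you do not need a Brezis--Lieb splitting of the energy: once you have passed to the limit in the Euler--Lagrange equation (which is the genuinely delicate step with the varying exponent $\Peps$, requiring that $(u_n^+)^{p_{\eps_n}}\rightharpoonup(u^+)^{\Ds}$ in $L^{2N/(2N-\mu)}$ and the corresponding convergence of the Riesz potential) you know $u\in\mathcal{N}_*^\Omega$, and then weak lower semicontinuity of $\|\cdot\|_\lambda$ alone gives $\ell=\lim\frac{p_{\eps_n}-1}{2p_{\eps_n}}\|u_n\|_\lambda^2\ge\frac{\Ds-1}{2\cdot\Ds}\|u\|_\lambda^2=I_*^\Omega(u)\ge m_*$. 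Second, your branch $u=0$ is essentially the paper's H\"older-plus-HLS computation in disguise (you need $\|u_n\|_\lambda^2$ bounded away from zero, which is Lemma \ref{condizioninehari}(ii), to make the exponent $\eps_n/\Ds$ harmless). So your route subsumes the paper's key estimate and adds the burden of passing to the limit in the equation, which the paper avoids entirely and defers all concentration analysis to Theorem \ref{conccmptnes}, where it is actually needed. What your approach buys is robustness: it would survive in situations where the explicit scaling of the Nehari projection is not available in closed form.
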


\begin{proof}
By Lemma \ref{m*sup} it is sufficient to show that
\begin{equation}\label{parte1}
m_*\le \liminf_{\eps \rightarrow 0}m_\eps.
\end{equation}
Let $u_\eps \in \mathcal{N}_\eps$ such that $I_\eps(u_\eps)=m_\eps$. Let $t_*^\Omega(u_\eps)>0$ the unique value such that $t_*^\Omega(u_\eps)u_\eps \in~\mathcal{N}_*^\Omega$, that is
\begin{equation}
(t_*^\Omega(u_\eps))^2\|u_\eps\|_\lambda^2=(t_*^\Omega(u_\eps))^{2\cdot \Ds}\int_\Omega \int_\Omega \frac{(u_\eps^+(x))^{\Ds}(u_\eps^+(y))^{\Ds}}{|x-y|^{\mu}}\di x \di y.
\end{equation}
Recalling that $u_\eps \in \mathcal{N}_\eps$, we obtain
\begin{equation}\label{ttende1}
\begin{aligned}
(t_*^\Omega(u_\eps))^{2\cdot \Ds-2}&=\frac{\|u_\eps\|_\lambda^2}{\displaystyle \int_\Omega \int_\Omega \frac{(u_\eps^+(x))^{\Ds}(u_\eps^+(y))^{\Ds}}{|x-y|^{\mu}}\di x \di y}=\frac{\displaystyle \int_\Omega \int_\Omega \frac{(u_\eps^+(x))^{\Peps}(u_\eps^+(y))^{\Peps}}{|x-y|^{\mu}}\di x \di y}{\displaystyle \int_\Omega \int_\Omega \frac{(u_\eps^+(x))^{\Ds}(u_\eps^+(y))^{\Ds}}{|x-y|^{\mu}}\di x \di y}.
\end{aligned}
\end{equation}
We claim that 
\begin{equation}\label{temposotto1}
\limsup_{\eps \rightarrow 0}(t_*^\Omega(u_\eps))^{2\cdot \Ds-2}\le 1.
\end{equation}
 To see this, we write
\begin{equation*}
\int_\Omega \int_\Omega \frac{(u_\eps^+(x))^{\Peps}(u_\eps^+(y))^{\Peps}}{|x-y|^{\mu}}\di x \di y=\int_\Omega \int_\Omega \frac{(u_\eps^+(x))^{\Peps}(u_\eps^+(y))^{\Peps}}{|x-y|^{\mu \cdot \frac{\Peps}{\Ds}}}\cdot \frac{1}{|x-y|^{\mu (1-\Peps/\Ds)}}\di x \di y
\end{equation*}
and by H\"{o}lder inequality we have
\begin{equation}
\begin{aligned}
\int_\Omega \int_\Omega &\frac{(u_\eps^+(x))^{\Peps}(u_\eps^+(y))^{\Peps}}{|x-y|^{\mu}}\di x \di y\\
&\le \Big( \int_\Omega \int_\Omega \frac{(u_\eps^+(x))^{\Ds}(u_\eps^+(y))^{\Ds}}{|x-y|^{\mu}}\di x \di y\Big)^{\frac{\Peps}{\Ds}}\cdot \Big(\int_\Omega \int_\Omega \frac{1}{|x-y|^\mu}\di x \di y \Big)^{\frac{\Ds-\Peps}{\Ds}}. 
\end{aligned}
\end{equation}
By the change of variables $\xi =x-y$, $\eta=x+y$, for $\rho=\rho(\Omega)>0$ sufficiently large, we get that
\begin{equation}
\int_\Omega \int_\Omega \frac{1}{|x-y|^\mu}\di x \di y\le \int_{B_\rho}\int_{B_\rho}\frac{1}{|\xi|^\mu} \di \xi \di \eta \le C(\rho) \int_{B_\rho}\frac{1}{|\xi|^\mu}\di \xi=C(\rho),
\end{equation}
being $\mu<N$. Since $\rho$ depends only on $\Omega$ we obtain
\begin{equation}\label{holderdoppia}
\int_\Omega \int_\Omega \frac{(u_\eps^+(x))^{\Peps}(u_\eps^+(y))^{\Peps}}{|x-y|^{\mu}}\di x \di y\le \Big( \int_\Omega \int_\Omega \frac{(u_\eps^+(x))^{\Ds}(u_\eps^+(y))^{\Ds}}{|x-y|^{\mu}}\di x \di y\Big)^{\frac{\Peps}{\Ds}}\cdot(C(\Omega))^{\frac{\eps}{\Ds}}.
\end{equation}
Then, putting $\eqref{holderdoppia}$ in $\eqref{ttende1}$ we have
\begin{equation}\label{429}
\begin{aligned}
(t_*^\Omega(u_\eps))^{2\cdot \Ds-2}&\le \Big( \int_\Omega \int_\Omega \frac{(u_\eps^+(x))^{\Ds}(u_\eps^+(y))^{\Ds}}{|x-y|^{\mu}}\di x \di y\Big)^{\frac{\Peps}{\Ds}-1}\cdot(C(\Omega))^{\frac{\eps}{\Ds}}\\
&=\left (\frac{C(\Omega)}{\displaystyle \int_\Omega \int_\Omega \frac{(u_\eps^+(x))^{\Ds}(u_\eps^+(y))^{\Ds}}{|x-y|^{\mu}}\di x \di y}\right )^{\frac{\eps}{\Ds}}.
\end{aligned}
\end{equation}
Since $u_\eps \in \mathcal{N}_\eps$ and Proposition \ref{HLS} holds, $\displaystyle \int_\Omega \int_\Omega \frac{(u_\eps^+(x))^{\Ds}(u_\eps^+(y))^{\Ds}}{|x-y|^{\mu}}\di x \di y$ is uniformly bounded in $\eps$, and hence from $\eqref{429}$ we deduce $\eqref{temposotto1}$.

Thus 
\begin{equation*}
\begin{aligned}
m_*&=m_*^\Omega \le I_*^\Omega (t_*^\Omega(u_\eps)u_\eps)=(t_*^\Omega(u_\eps))^{2\cdot \Ds}\Big(\frac{1}{2}-\frac{1}{2\cdot \Ds}\Big) \int_\Omega \int_\Omega \frac{(u_\eps^+(x))^{\Ds}(u_\eps^+(y))^{\Ds}}{|x-y|^{\mu}}\di x \di y\\
&=(t_*^\Omega(u_\eps))^{2\cdot \Ds}\Big(\frac{1}{2}-\frac{1}{2\cdot \Ds}\Big)\|u_\eps\|_\lambda^2=(t_*^\Omega(u_\eps))^{2\cdot \Ds}\frac{\frac{1}{2}-\frac{1}{2\cdot \Ds}}{\frac{1}{2}-\frac{1}{2\Peps}}\cdot \Big(\frac{1}{2}-\frac{1}{2\Peps}\Big)\|u_\eps\|_\lambda^2\\
&\le (1+o(1))m_\eps,
\end{aligned}
\end{equation*}
where $o(1)\rightarrow 0$ as $\eps\rightarrow 0$ and $m_\eps$ is bounded from Lemma \ref{m*sup}. Therefore $\eqref{parte1}$ is showed.
\end{proof}
\section{The barycenter map}\label{sec5}
We introduce the barycenter of a function $u\in H^1(\setR)$ as
\begin{equation}\label{eq:bari}
\beta(u):=\dfrac{\int_{\mathbb{R}^N}x^i|\nabla u|^2\di x}{\int_{\mathbb{R}^N}|\nabla u|^2\di x}.
\end{equation}

Now we state a {\em splitting lemma} which gives us a complete description of the functional $I_*^\Omega$ (defined in $\eqref{L}$):
\begin{thm}\label{conccmptnes}
Let $\Omega$ be a regular and bounded domain of $\mathbb{R}^N$, $N> 3$ and let $\{v_n \}_{n\in \mathbb{N}}$ a $(PS)$ sequence for $I_*^\Omega$ in $H_0^1(\Omega)$.

Then there exist $k\in \mathbb{N}_0$, a sequence $\{x_n^j\}_{n\in \mathbb{N}}$ of points $x_n^j\in \Omega$, a sequence $\{R_n^j\}_{n\in \mathbb{N}}$ of radii $R_n^j\rightarrow +\infty$ as $n\rightarrow+\infty$ ($1\le j\le k$), a solution $v\in H_0^1(\Omega)$ of $\eqref{Prob}$ and non-trivial solutions $v^j\in \mathcal{D}^{1,2}(\mathbb{R}^N)$, $1\le j \le k$, to the limit problem $\eqref{PLRN}$ such that a subsequence $\{v_{n}\}_{n\in \mathbb{N}}$ satisfies
\begin{equation}\label{spiltfnct}
\Big\|v_n-v-\sum_{j=1}^k{v_{R_n,x_n}^j}\Big\|_{\mathcal{D}^{1,2}(\mathbb{R}^N)}\rightarrow 0.
\end{equation}
Here $v_{R_n,x_n}^j$ denotes the rescaled functions
\[
v_{R_n,x_n}^j(x):=(R_n^j)^{\frac{N-2}{2}}v^j(R_n^j(x-x_n^j))\quad 1\le j\le k.
\]
Moreover we have the following splitting
\begin{equation}\label{fnctionalsplit}
I_*^\Omega(v_n)\rightarrow I_*^\Omega(v)+\sum_{j=1}^k{I_*(v^j)}.
\end{equation}
\end{thm}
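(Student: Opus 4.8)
The plan is to follow the classical concentration-compactness/profile-decomposition strategy à la Struwe, adapted to the Choquard nonlocal setting (which has been carried out in the literature for critical Choquard equations, e.g.\ in works of Gao--Yang and in the local case by Struwe's celebrated lemma). First I would record the preliminaries: a $(PS)$ sequence $\{v_n\}$ for $I_*^\Omega$ is bounded in $H_0^1(\Omega)$ — this follows from the usual computation $I_*^\Omega(v_n)-\frac{1}{2\cdot\Ds}(I_*^\Omega)'(v_n)[v_n]=\big(\frac12-\frac1{2\cdot\Ds}\big)\|v_n\|_\lambda^2$ together with $I_*^\Omega(v_n)$ bounded and $(I_*^\Omega)'(v_n)\to0$. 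Hence, up to a subsequence, $v_n\rightharpoonup v$ weakly in $H_0^1(\Omega)$, $v_n\to v$ in $L^2(\Omega)$, and $v_n\to v$ a.e.; a standard argument (Brezis--Lieb type, using that the Riesz potential is a bounded operator) shows $v$ is a weak solution of \eqref{Prob}, i.e.\ a critical point of $I_*^\Omega$.

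Next I would set $w_n^1:=v_n-v$, so that $w_n^1\rightharpoonup0$, and use the Brezis--Lieb lemma for the nonlocal term — this is the key structural fact: one shows
\[
\int_\Omega\int_\Omega\frac{(v_n^+(x))^{\Ds}(v_n^+(y))^{\Ds}}{|x-y|^\mu}\di x\di y=\int_\Omega\int_\Omega\frac{(v^+(x))^{\Ds}(v^+(y))^{\Ds}}{|x-y|^\mu}\di x\di y+\int_\Omega\int_\Omega\frac{((w_n^1)^+(x))^{\Ds}((w_n^1)^+(y))^{\Ds}}{|x-y|^\mu}\di x\di y+o(1),
\]
a nonlocal Brezis--Lieb splitting (proved e.g.\ in Gao--Yang). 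Combined with the standard $\|v_n\|_\lambda^2=\|v\|_\lambda^2+\|w_n^1\|_{\Dud}^2+o(1)$ (the $L^2$ part drops out of the $\Dud$-seminorm since $w_n^1\to0$ strongly in $L^2$), one deduces that $w_n^1$ is a $(PS)$ sequence for $I_*$ on $\Dud$ at level $I_*^\Omega(v_n)-I_*^\Omega(v)+o(1)$. If $w_n^1\to0$ strongly we stop with $k=0$. Otherwise, a concentration argument (a Lions-type lemma, using that $S_{H,L}^\Omega=S_{H,L}$ is never achieved on a proper domain, Lemma \ref{Shlomeganotachiev}) produces scales $R_n^1\to+\infty$ and points $x_n^1$ such that the rescaled sequence $\tilde w_n^1(x):=(R_n^1)^{-\frac{N-2}{2}}w_n^1\big((x-x_n^1)/R_n^1\big)$, extended by zero, converges weakly in $\Dud$ to a nontrivial solution $v^1$ of the limit problem \eqref{PLRN}. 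Here one must check the blow-up rate $R_n^1\to\infty$ and that $\dist(x_n^1,\partial\Omega)R_n^1\to\infty$, exactly as in Struwe; this forces the limit equation to lose the $\lambda u$ term and the domain to become all of $\setR$.

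Then I would iterate: set $w_n^2:=w_n^1-v^1_{R_n^1,x_n^1}$, show via the same Brezis--Lieb bookkeeping that $\|w_n^2\|_{\Dud}^2=\|w_n^1\|_{\Dud}^2-\|v^1\|_{\Dud}^2+o(1)$ and $I_*(w_n^2)=I_*(w_n^1)-I_*(v^1)+o(1)$ and that $w_n^2$ is again a $(PS)$ sequence for $I_*$, and repeat. Since every nontrivial solution of \eqref{PLRN} satisfies $I_*(v^j)\ge m_*>0$ (Lemma \ref{mstarvalue}), and the energies $I_*(w_n^j)$ form a decreasing nonnegative sequence dropping by at least $m_*$ at each stage, the process terminates after finitely many steps $k$, with $\|w_n^{k+1}\|_{\Dud}\to0$. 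Telescoping the seminorm and energy identities gives \eqref{spiltfnct} and \eqref{fnctionalsplit}. The main obstacle, and the step deserving the most care, is the nonlocal Brezis--Lieb lemma together with the correct rescaling/translation argument: because the Choquard nonlinearity couples the function at two points through $|x-y|^{-\mu}$, the splitting of the double integral under weak convergence (and under the change of variables adapted to the concentration scale $R_n^j$) is more delicate than in the local case — one must control cross terms of the form $\int\int (v^+)^{\Ds}((w_n)^+)^{\Ds}|x-y|^{-\mu}$ and show they vanish, using the Hardy--Littlewood--Sobolev inequality (Proposition \ref{HardyLittS}), the boundedness in $L^{2^*}$, and weak convergence; this is where the bulk of the technical work lies.
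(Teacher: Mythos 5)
Your proposal follows essentially the same route as the paper's appendix: extract the weak limit $v$ solving \eqref{Prob}, apply the nonlocal Brezis--Lieb splitting of Gao--Yang to the Choquard term together with the standard gradient splitting, rescale the remainder via a Struwe-type concentration argument (the paper uses the concentration function $Q_n(r)$ and the nonexistence of solutions on the half-space to force $R_n\dist(x_n,\partial\Omega)\to\infty$, which is the same circle of ideas as your Lions-type lemma), and iterate until the energy quantization $I_*(v^j)\ge m_*>0$ terminates the process. The argument is correct and matches the paper's proof in all essential steps.
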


\begin{remark}\label{dopoteokey}
If we have a $(PS)$-sequence $\{v_n \}_{n\in \mathbb{N}}$ for $I_*^\Omega$ at level $m_*^\Omega$, from Lemma \ref{m*nonraggiunto} we know that $m_*^\Omega$ is not achieved, thus from Theorem \ref{conccmptnes} we have that the only contribution in the r.h.s of $\eqref{fnctionalsplit}$ comes from $U_{R,a}$, i.e.\
\[
v=0,\quad k=1\quad \text{and}\quad v^1=U_{R,a}
\]
and hence
\begin{equation}\label{bubblevicina}
v_n-U_{R_n,x_n}\rightarrow 0 \quad \text{in}\; \mathcal{D}^{1,2}(\mathbb{R}^N).
\end{equation}
\end{remark}
By Theorem \ref{conccmptnes} we can prove the following property:

\begin{prop}\label{baricentrodentro}
There exist $\delta_0>0$ and $\eps_0=\eps_0(\delta_0)>0$ such that for any $\delta \in (0,\delta_0]$ and for any $\eps\in (0,\eps_0]$ it holds 
\[
u\in \mathcal{N}_\eps\quad \text{and}\quad I_\eps(u)<m_\eps +\delta \Rightarrow \beta (u)\in \Omega_r^+.
\]
\end{prop}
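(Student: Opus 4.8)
The plan is to argue by contradiction, exploiting the fact that a function with low energy on $\mathcal{N}_\eps$ must, after rescaling to $\mathcal{N}_*^\Omega$, be a $(PS)$-sequence for $I_*^\Omega$ at the level $m_*^\Omega=m_*$, so that Theorem \ref{conccmptnes} together with Remark \ref{dopoteokey} forces it to concentrate like a single bubble $U_{R_n,x_n}$. Concretely, suppose the statement fails. Then there are sequences $\delta_n\to 0$, $\eps_n\to 0$ and $u_n\in\mathcal{N}_{\eps_n}$ with $I_{\eps_n}(u_n)<m_{\eps_n}+\delta_n$ but $\beta(u_n)\notin\Omega_r^+$. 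Let $t_n:=t_*^\Omega(u_n)>0$ be the unique number with $w_n:=t_nu_n\in\mathcal{N}_*^\Omega$. First I would show $t_n\to 1$: the same Hölder computation as in \eqref{holderdoppia}--\eqref{429} gives $\limsup_n t_n^{2\cdot\Ds-2}\le 1$, and combining $I_{\eps_n}(u_n)<m_{\eps_n}+\delta_n$ with Proposition \ref{limitemeps} (so $m_{\eps_n}\to m_*$) and the chain of identities in the proof of Proposition \ref{limitemeps} one gets
\[
m_*\le I_*^\Omega(w_n)=t_n^{2\cdot\Ds}\,\frac{\tfrac12-\tfrac1{2\Ds}}{\tfrac12-\tfrac1{2\Peps_n}}\Bigl(\tfrac12-\tfrac1{2\Peps_n}\Bigr)\|u_n\|_\lambda^2\le (1+o(1))\,m_{\eps_n}\to m_*,
\]
which pins down $t_n\to 1$ and, at the same time, $I_*^\Omega(w_n)\to m_*=m_*^\Omega$.

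Next I would check that $\{w_n\}$ is a $(PS)$-sequence for $I_*^\Omega$ on $H_0^1(\Omega)$. Since $w_n\in\mathcal{N}_*^\Omega$ and $I_*^\Omega(w_n)\to m_*^\Omega=\inf_{\mathcal{N}_*^\Omega}I_*^\Omega$, Ekeland's variational principle on the manifold $\mathcal{N}_*^\Omega$ (or the minimizing-sequence argument used for the Nehari manifold, cf. Lemma \ref{Pswellbehaves}) produces a genuine $(PS)_{m_*^\Omega}$-sequence differing from $w_n$ by $o(1)$ in norm; alternatively one shows directly that $\|u_n-w_n\|\to 0$ and that $u_n$, being a minimizer on $\mathcal{N}_{\eps_n}$, is an almost-critical point of $I_*^\Omega$ because $\eps_n\to 0$ makes the two functionals and their derivatives uniformly close on the bounded set $\{u_n\}$ (recall $\|u_n\|_\lambda^2=\tfrac{2\Peps_n}{\Peps_n-1}m_{\eps_n}$ is bounded, Remark \ref{graundbdd}). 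Either way we obtain a $(PS)$-sequence for $I_*^\Omega$ at level $m_*^\Omega$.

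Now Remark \ref{dopoteokey} applies: there exist $R_n>0$ and points $x_n\in\Omega$ with
\[
w_n-U_{R_n,x_n}\to 0\quad\text{in }\mathcal{D}^{1,2}(\setR),
\]
and $\|u_n-w_n\|\to0$, so the same holds with $w_n$ replaced by $u_n$. Since $\beta$ is continuous for the $\mathcal{D}^{1,2}$-topology and $\beta(U_{R_n,x_n})=x_n$ (by the radial symmetry of $U_1$ about its center), we get $\beta(u_n)-x_n\to 0$. It remains to locate $x_n$. From $\|U_{R_n,x_n}-w_n\|\to 0$ and the fact that $w_n$ is supported in $\overline\Omega$, the mass of $U_{R_n,x_n}$ must asymptotically live inside $\Omega$; a standard argument (if $\dist(x_n,\Omega)$ stays bounded away from $0$, or if $R_n$ stays bounded, the energy $\int|\nabla U_{R_n,x_n}|^2$ over $\setR\setminus\Omega$ would not be negligible, contradicting convergence) yields $R_n\to+\infty$ and $\dist(x_n,\Omega)\to 0$, hence eventually $x_n\in\Omega_r^+$ and therefore $\beta(u_n)\in\Omega_r^+$ for $n$ large, contradicting our assumption.

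The main obstacle is the middle step: passing rigorously from ``$u_n$ has low $I_{\eps_n}$-energy on $\mathcal{N}_{\eps_n}$'' to ``$w_n=t_nu_n$ is a bona fide $(PS)_{m_*^\Omega}$-sequence for $I_*^\Omega$.'' One must control three sources of error simultaneously — the Nehari rescaling $t_n$, the $\eps_n$-perturbation of the nonlinearity (the exponent $\Peps_n$ versus $\Ds$ and the Hölder loss $(C(\Omega))^{\eps_n/\Ds}$), and the constraint derivative terms coming from differentiating along $\mathcal{N}_*^\Omega$ rather than $\mathcal{N}_{\eps_n}$ — and show they all vanish as $n\to\infty$, using only the uniform bound on $\|u_n\|_\lambda$ and the convergence $m_{\eps_n}\to m_*$. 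Once that is in place, Theorem \ref{conccmptnes}/Remark \ref{dopoteokey} does the heavy lifting and the localization of the bubble center is routine.
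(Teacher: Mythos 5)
Your argument reproduces the paper's proof almost step for step: the same contradiction setup, the same rescaling $t_n=t_*^\Omega(u_n)$ with $\limsup t_n\le 1$ via the H\"older estimate, the same use of Ekeland's principle plus the natural-constraint lemma to upgrade $\{t_nu_n\}$ to a genuine $(PS)_{m_*^\Omega}$-sequence, and the same appeal to Remark \ref{dopoteokey} to extract a single bubble $U_{R_n,x_n}$ with $x_n\in\Omega$. The ``main obstacle'' you flag is resolved in the paper exactly as you propose.

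The one step that does not hold as written is the claim that $\beta$ is continuous for the $\mathcal{D}^{1,2}(\setR)$-topology. The numerator $\int_{\setR}x^i|\nabla u|^2\di x$ carries the unbounded weight $x^i$, so it is neither finite for every $u\in\mathcal{D}^{1,2}(\setR)$ nor continuous: a perturbation of fixed small gradient norm centred at distance $D$ from the origin shifts the numerator by an amount of order $D$. Since $t_nu_n$ is supported in $\overline\Omega$ while $U_{R_n,x_n}$ is not, you cannot pass from $\beta(U_{R_n,x_n})=x_n$ to $\beta(u_n)\approx x_n$ by continuity alone; the error term $w_n=t_nu_n-U_{R_n,x_n}$ equals $-U_{R_n,x_n}$ on $\setR\setminus\Omega$, and one must estimate $\int_{\setR\setminus\Omega}x^i|\nabla U_{R_n,x_n}|^2\di x$ by hand. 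The paper does exactly this (the terms $J_1,J_2,J_3$), reducing everything to the finiteness of $\int_{\setR}|y|\,|\nabla U_1(y)|^2\di y$ and the factor $1/R_n$ --- and this is precisely where the hypothesis $N>3$ enters, since that weighted integral diverges for $N=3$. Your ``radial symmetry'' remark is fine once this absolute convergence is in place, so the fix is routine, but as stated the continuity claim is the one genuine gap in an otherwise faithful reconstruction.
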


\begin{proof}
By contradiction we suppose that there exist sequences $\delta_n \rightarrow 0$, $\eps_n\rightarrow 0$ and $u_n \in \mathcal{N}_{\eps_n}$ such that
\begin{equation}\label{Ha}
I_{\eps_n}(u_n)\le m_{\eps_n} +\delta_n \quad \text{and}\quad \beta(u_n)\notin \Omega_r^+.
\end{equation}
From this inequality and Proposition \ref{limitemeps} we deduce that
\begin{equation}\label{5A}
I_{\eps_n}(u_n)\rightarrow m_*
\end{equation}
and $\{u_n \}_{n\in \mathbb{N}}$ is bounded in $H_0^1(\Omega)$. We take $t_{*,n}^\Omega(u_n)>0$ the unique value such that $t_{*,n}^\Omega(u_n) u_n\in~\mathcal{N}_*^\Omega$. Defined $p_n:=\Ds-\eps_n$, we evaluate
\begin{equation}
\begin{aligned}
I_{\eps_n}(u_n)&- I_*^\Omega(t_{*,n}^\Omega(u_n) u_n)=\Big(\frac{1}{2}-\frac{1}{2p_n}\Big)\|u_n\|_\lambda^2-\Big(\frac{1}{2}-\frac{1}{2\cdot \Ds}\Big)(t_{*,n}^\Omega(u_n))^2\|u_n\|_\lambda^2 \\
&= \Big(\frac{1}{2}-\frac{1}{2p_n}\Big)(1-(t_{*,n}^\Omega(u_n))^2)\|u_n\|_\lambda^2-\Big(\frac{1}{2p_n}-\frac{1}{2\cdot \Ds}\Big)(t_{*,n}^\Omega(u_n))^2\|u_n\|_\lambda^2.
\end{aligned}
\end{equation}
Proceeding as in Proposition \ref{limitemeps}, we have $t_{*,n}^\Omega(u_n) \le 1+o(1)$ where $o(1)\rightarrow 0$ as $n\rightarrow +\infty$, and hence
\[
\Big(\frac{1}{2}-\frac{1}{2p_n}\Big)(1-(t_{*_n}^\Omega(u_n))^2\|u_n\|_\lambda^2\ge o(1)
\]
and since $p_n \rightarrow \Ds$, we get
\[
\Big(\frac{1}{2p_n}-\frac{1}{2\cdot \Ds}\Big)(t_{*,n}^\Omega(u_n))^2)\|u_n\|_\lambda^2=o(1),
\]
from which
\[
I_{\eps_n}(u_n)- I_*^\Omega(t_{*_n}^\Omega(u_n) u_n)\ge o(1).
\]
Then, thanks to $\eqref{5A}$, we obtain
\[
m_*=m_*^\Omega\le \lim_{n \rightarrow +\infty} I_*^\Omega(t_{*,n}^\Omega(u_n) u_n)\le \lim_{n \rightarrow +\infty} \Big( I_{\eps_n}(u_n) +o(1) \Big)=m_*, 
\]
that is
\[
I_*^\Omega(t_{*,n}^\Omega(u_n) u_n)\rightarrow m_*\quad \text{as}\; n\rightarrow +\infty.
\]
Now, Ekeland's variational principle gives us the existence of $\{v_n \}_{n\in \mathbb{N}}\subset \mathcal{N}_*^\Omega$ and $\{\mu_n \}_{n\in \mathbb{N}}\subset~\mathbb{R}$ such that
\begin{equation}\label{EK}
\|t_{*,n}^\Omega(u_n) u_n-v_n\|\rightarrow 0
\end{equation}
\[
I_*^\Omega(v_n)=\Big(\frac{1}{2}-\frac{1}{2\cdot \Ds} \Big)\| v_n\|_\lambda^2\rightarrow m_*
\]
\[
(I_*^\Omega)'(v_n)-\mu_n(G_*^\Omega)'(v_n)\rightarrow 0.
\]
Applying Lemma \ref{Pswellbehaves}, we obtain that $\{v_n \}_{n\in \mathbb{N}}$ is a $(PS)$-sequence also for the free functional $I_*^\Omega$ at level $m_*$, hence Remark \ref{dopoteokey} implies that
\[
v_n-U_{R_n,x_n}\rightarrow 0\quad \text{in}\; \mathcal{D}^{1,2}(\mathbb{R}^N)
\]
where $\{x_n\}_{n\in \mathbb{N}}\subset \Omega$ and $R_n \rightarrow +\infty$.
In an equivalent way, we can write
\[
v_n=U_{R_n,x_n}+w_n,
\]
with $\|w_n\|_{\mathcal{D}^{1,2}(\mathbb{R}^N)}\rightarrow 0$. From $\eqref{EK}$, unless to relabel $w_n$, we have that
\begin{equation}\label{tomsplit}
t_{*,n}^\Omega(u_n) u_n=U_{R_n,x_n}+w_n.
\end{equation}
Using this equality, if $x=(x^1, \cdots,x^N)\in \mathbb{R}^N$, we get that
\begin{equation}\label{B}
\begin{aligned}
\beta(t_{*,n}^\Omega(u_n) u_n)^i\|t_{*,n}^\Omega(u_n) u_n\|_{\mathcal{D}^{1,2}(\mathbb{R}^N)}^2&=\int_{\mathbb{R}^N}x^i|\nabla U_{R_n,x_n}(x)|^2\di x+\int_{\mathbb{R}^N}x^i|\nabla w_n(x)|^2 \di x\\
&+2 \int_{\setR}x^i\nabla U_{R_n,x_n}(x)\nabla w_n(x)\di x=:J_1+J_2+2J_3.
\end{aligned}
\end{equation}

Since $
U_{R_n, x_n}(x)=R_n^{\frac{N-2}{2}}U_1(R_n(x-x_n)),
$
using the change of variables
$
y=R_n(x-x_n),
$
we obtain 
\begin{equation}\label{5C}
\begin{aligned}
J_1&:=\int_{\mathbb{R}^N}x^i|\nabla U_{R_n,x_n}(x)|^2\di x=\int_{\mathbb{R}^N}x^i R_n^N |\nabla U_1(R_n(x-x_n))|^2\di x\\
&=\frac{1}{R_n}\int_{\setR}y^i|\nabla U_1(y)|^2\di y+x_n^i\int_{\setR}|\nabla U_1(y)|^2\di y\\
&=o_{n}(1)+x_n^i\int_{\setR}|\nabla U_1(y)|^2\di y\qquad \text{since}\; N>3,
\end{aligned}
\end{equation}
where $o_n(1)\rightarrow 0$ as $n\rightarrow +\infty$.

Analogously, from $\eqref{tomsplit}$, we obtain
\begin{equation}\label{5B}
\|t_{*_n}^\Omega(u_n) u_n\|_{\Dud}^2=\|U_1\|_{\Dud}^2+o_{R_n}(1).
\end{equation}
 
To evaluate $J_2$, we recall that $\{v_n\}_{n\in \N}$ is a subsequence supported in $\Omega$, so
\[
U_{R_n,x_n}=-w_n\quad \text{in}\; \setR \setminus \Omega
\]
and hence, since $w_n\rightarrow 0$ in $\mathcal{D}^{1,2}(\mathbb{R}^N)$,
\begin{equation}
\begin{aligned}
J_2&:=\int_{\mathbb{R}^N}x^i|\nabla w_n(x)|^2 \di x=\int_\Omega x^i|\nabla w_n(x)|^2 \di x+\int_{\setR \setminus \Omega}x^i|\nabla w_n(x)|^2 \di x\\
&=o_n(1)+\int_{\setR \setminus \Omega}x^iR_n^{N}|\nabla U_1(R_n(x-x_n))|^2\di x\\
&=o_n(1)+\int_{\setR \setminus R_n(\Omega-x_n)}\Big(\frac{y^i}{R_n}+x_n^i\Big)|\nabla U_1(y)|^2\di y\\
&=o_n(1)+\frac{1}{R_n}\int_{\setR \setminus R_n(\Omega-x_n)}y^i|\nabla U_1(y)|^2\di y+x_n^i\int_{\setR \setminus R_n(\Omega-x_n)}|\nabla U_1(y)|^2\di y\\
&=o_{n}(1).
\end{aligned}
\end{equation}
Finally in the same way
\begin{equation}\label{5E}
J_3=o_n(1).
\end{equation}

Putting $\eqref{5B}$, $\eqref{5C}$ and $\eqref{5E}$ in $\eqref{B}$ we obtain
\begin{equation}\label{barinome}
\beta(u_n)^i=\beta(t_{*,n}^\Omega(u_n) u_n)^i=\frac{x_n^i\|U_1\|_{\Dud}^2+o_n(1)}{\|U_1\|_{\Dud}^2+o_n(1)}.
\end{equation}
Since $\{x_n \}_{n\in \mathbb{N}}\subset \Omega$, from $\eqref{barinome}$, we deduce that $\beta(u_n)\in \Omega_r^+$ for $n$ large, which contradicts $\eqref{Ha}$ and concludes the proof.
\end{proof}

At this point we are ready to prove our main theorem:
\begin{proof}[Proof of Theorem $1.1$]
Fix $\delta_0>0$ and $\eps_0(\delta_0)>0$ as in Proposition \ref{baricentrodentro}. Then, for all $\eps<\eps_0(\delta_0)$ it holds
\begin{equation}\label{barri}
u\in \mathcal{N}_\eps\quad \text{and}\quad I_\eps(u)<m_\eps +\delta_0 \Rightarrow \beta (u)\in \Omega_r^+.
\end{equation}
In correspondence of $\delta_0$, by Proposition \ref{limitemeps}, there exists $\bar{\eps}(\delta_0)>0$ such that
\begin{equation}\label{limite}
|m_*-m_\eps|<\frac{\delta_0}{2}\qquad \forall \; \eps<\bar{\eps}(\delta_0).
\end{equation}
Moreover, by Lemma \ref{m*sup}, there exists $\hat{\eps}(\delta_0)>0$ such that for all $\eps<\hat{\eps}(\delta_0)$ there exists $R=R(\delta_0,\eps)>1$ such that
\begin{equation}\label{stosotto}
I_\eps(t_\eps(u_{R,x_0}) u_{R,x_0}(x))\le m_*+\frac{\delta_0}{2},
\end{equation}
where $u_{R,x_0}$ is introduced in $\eqref{ur}$ and $t_\eps(u_{R,x_0})>0$ is the unique value such that $t_\eps(u_{R,x_0})u_{R,x_0}\in \mathcal{N}_\eps$.

Then, taking $\eps<\min \{\eps_0(\delta_0), \bar{\eps}(\delta_0), \hat{\eps}(\delta_0)\}$ and choosing ${R}={R}(\delta_0,\eps)>1$, we define as
\begin{equation}\label{psii}
\begin{aligned}
\psi_{\eps}&:\Omega_r^-\rightarrow \mathcal{N}_\eps\\
\psi_\eps(x)&:=t_\eps(u_{{R},x_0}) u_{{R},x_0}(x).
\end{aligned}
\end{equation}
From $\eqref{psii}$, $\eqref{stosotto}$ and $\eqref{limite}$, we get that
\begin{equation}\label{psibndef}
\psi_\eps(\Omega_r^-)\subset \mathcal{N}_\eps \cap I_\eps^{m_*+\delta_0/2}\subset \mathcal{N}_\eps \cap I_\eps^{m_\eps+\delta_0},
\end{equation}
where, for $c\in \mathbb{R}$, $I_\eps^c:=\{u_\eps \in \mathcal{N}_\eps :I_\eps(u)\le c \}$ denotes the sublevel set.

From $\eqref{psibndef}$ and $\eqref{barri}$, the following maps are well-defined
\[
\Omega_r^-\xrightarrow{\psi_{\eps}} \mathcal{N}_\eps \cap I_\eps^{m_\eps+\delta_0} \xrightarrow{\beta} \Omega_r^+
\]
and $\beta \circ \psi_\eps$ is homotopic to the identity on $\Omega_r^-$. So, thanks to Remark \ref{catinclusione} we get
\begin{equation}\label{categorieincluse}
\text{cat}_{I_\eps^{m_\eps+\delta_0}}(\psi_\eps(\Omega_r^-))\ge \text{cat}_{\Omega_r^-}(\Omega_r^-)
=\text{cat}_{\Omega}(\Omega)>1,
\end{equation}
hence we found a sublevel $I_\eps$ of $\mathcal{N}_\eps$ whose category is greater than cat$_{\Omega}(\Omega)$. Then, we can apply Theorem \ref{LSthm} to obtain the existence of at least cat$_{\Omega}(\Omega)$ critical points in $I_\eps^{m_\eps+\delta_0}$.

It remains to show  that there exists another solution of $\eqref{P}$ such that $I_\eps(\bar{u})>m_\eps+\delta_0$.

To obtain this, proceeding as in \cite{BBM}*{Section 6}, we construct a set $T_\eps$ such that
\[
\psi_\eps(\Omega_r^-) \subset T_\eps \subset \mathcal{N}_\eps \cap I_\eps^{c_\eps}
\]
for some $c_\eps$, $T_\eps$ is contractible in $\mathcal{N}_\eps \cap I_\eps^{c_\eps}$ and it contains only positive functions. Then, by Theorem \ref{LSthm}, we have the claim. We can also show that $c_\eps\le c$ for all $\eps$.

Consider $v\in H_0^1(\mathbb{R}^N)$ any positive function and for $\bar{x}\in \Omega_r^-$ we set 
\[
\bar{v}(\cdot):=v(\cdot)\chi_{B_r(\bar{x})}(\cdot),
\] 
where $\chi_{B_r(\bar{x})}$ is defined as in $\eqref{cutoff}$.

We denote with
\[
C_\eps:=\{\theta \bar{v}(x)+(1-\theta) \Psi(x): \theta\in [0,1],\;\Psi\in \psi_\eps(\Omega_r^-)  \}\subset H_0^1(\Omega).
\]
$C_\eps$ is compact, contractible in $H_0^1(\Omega)$ and it contains only positive functions.

Given $u\in H_0^1(\Omega)$, we take the unique value $t_\eps(u)>0$ such that $t_\eps(u)u \in \mathcal{N}_\eps$, that is
\begin{equation}\label{tepsilon}
(t_\eps(u))^{2\Peps-2}:=\frac{\|u\|_\lambda^2}{\displaystyle \int_\Omega\int_\Omega \frac{|u^+(x)|^{\Peps}|u^+(y)|^{\Peps}}{|x-y|^\mu}\di x\di y}
\end{equation}
and we set
\[
T_\eps:=\{t_\eps(u)u:u \in C_\eps \}\subset \mathcal{N}_\eps.
\]
We see that $\psi_\eps(\Omega_r^-) \subset T_\eps$, $T_\eps$ is compact and contractible in $\mathcal{N}_\eps$ and $T_\eps$ contains only positive functions. 

Finally, denoting with
\[
c_\eps:=\max_{u\in C_\eps} I_\eps(t_\eps(u)u),
\]
we get that $T_\eps \subset \mathcal{N}_\eps \cap I_\eps^{c_\eps}$.
\end{proof}

\begin{lemma}
There exists $c>0$ such that for every $\eps>0$ it results $c_\eps<c$.
\end{lemma}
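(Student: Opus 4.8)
The plan is to show that $c_\eps$ is bounded uniformly in $\eps$ by controlling the quantity $I_\eps(t_\eps(u)u)$ as $u$ ranges over the compact set $C_\eps$, and exploiting the fact that $C_\eps$ lies in a fixed bounded subset of $H_0^1(\Omega)$ that depends only on $\Omega$ and on the chosen function $v$, not on $\eps$. Recall from \eqref{IsuNeps} that for $u\in C_\eps$ we have
\[
I_\eps(t_\eps(u)u)=\frac{\Peps-1}{2\Peps}\,(t_\eps(u))^2\|u\|_\lambda^2,
\]
so the whole task reduces to bounding $(t_\eps(u))^2\|u\|_\lambda^2$ uniformly. From the defining relation \eqref{tepsilon},
\[
(t_\eps(u))^{2\Peps-2}=\frac{\|u\|_\lambda^2}{\displaystyle\int_\Omega\int_\Omega\frac{|u^+(x)|^{\Peps}|u^+(y)|^{\Peps}}{|x-y|^\mu}\di x\di y},
\]
and hence it suffices to bound the numerator from above and the double integral in the denominator from below, both uniformly over $u\in C_\eps$ and $\eps$ in a bounded range.

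First I would establish the upper bound on $\|u\|_\lambda$ for $u\in C_\eps$. Every element of $C_\eps$ is a convex combination $\theta\bar v+(1-\theta)\Psi$ with $\Psi\in\psi_\eps(\Omega_r^-)$. The function $\bar v$ has an $H_0^1$-norm depending only on $v$ and $\Omega$. For $\Psi=\psi_\eps(x)=t_\eps(u_{R,x_0})u_{R,x_0}$, we already know from \eqref{psibndef} together with \eqref{IsuNeps} that $\frac{\Peps-1}{2\Peps}\|\Psi\|_\lambda^2=I_\eps(\Psi)\le m_*+\delta_0/2$, and since $\frac{\Peps-1}{2\Peps}$ is bounded away from $0$ for $\eps$ small, $\|\Psi\|_\lambda$ is bounded by a constant independent of $\eps$. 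By the triangle inequality $\|u\|_\lambda\le\|\bar v\|_\lambda+\|\Psi\|_\lambda$ is therefore uniformly bounded; call this bound $M$.

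Next I would bound the double integral in the denominator from below. Here the obstacle is that a convex combination could in principle have a small positive part or be spread out, making $\int\int |u^+(x)|^{\Peps}|u^+(y)|^{\Peps}/|x-y|^\mu$ small. But $C_\eps$ consists only of \emph{positive} functions (as noted in the construction of $C_\eps$), so $u^+=u>0$ on $\Omega$, and both $\bar v$ and each $\Psi\in\psi_\eps(\Omega_r^-)$ are bounded below on the ball $B_{r/2}(\bar x)$ (resp.\ $B_{r/2}(x_0)$) by a positive constant depending only on $v$, $r$, and—for $\Psi$—on the value $t_\eps(u_{R,x_0})$, which by \eqref{4D} tends to $1$ and is thus bounded away from $0$. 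Since $\Omega_r^-$ is compact one can arrange that all the balls $B_{r/2}(x_0)$, $x_0\in\Omega_r^-$, and $B_{r/2}(\bar x)$ contain a common ball $B_{r/4}(z)$ of fixed radius; on such a ball any convex combination of two functions each $\ge\kappa>0$ is itself $\ge\kappa>0$. Hence $\int_\Omega\int_\Omega |u(x)|^{\Peps}|u(y)|^{\Peps}/|x-y|^\mu\di x\di y\ge\int_{B_{r/4}(z)}\int_{B_{r/4}(z)}\kappa^{2\Peps}/|x-y|^\mu\di x\di y\ge c_0>0$, uniformly in $\eps$ (using that $\Peps\to\Ds$ and $\mu<N$ so the integral of $|x-y|^{-\mu}$ over the ball is finite). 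Combining, $(t_\eps(u))^{2\Peps-2}\le M^2/c_0$, and since $2\Peps-2$ is bounded away from $0$, $(t_\eps(u))^2\le(M^2/c_0)^{1/(\Peps-1)}$ is uniformly bounded; plugging this and $\|u\|_\lambda^2\le M^2$ into the formula for $I_\eps(t_\eps(u)u)$ gives $c_\eps=\max_{u\in C_\eps}I_\eps(t_\eps(u)u)\le c$ for a constant $c$ independent of $\eps$. The main delicate point is the uniform lower bound on the double integral, which is where the positivity of the functions in $C_\eps$ and the compactness of $\Omega_r^-$ are used.
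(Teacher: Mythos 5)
Your overall strategy matches the paper's: bound $\|u\|_\lambda$ from above uniformly over $C_\eps$, bound the Choquard double integral from below uniformly, and conclude that $t_\eps(u)$ and hence $I_\eps(t_\eps(u)u)$ are bounded. The upper-bound half of your argument is fine.

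There is, however, a genuine gap in your lower-bound step. The assertion that ``all the balls $B_{r/2}(x_0)$, $x_0 \in \Omega_r^-$, and $B_{r/2}(\bar x)$ contain a common ball $B_{r/4}(z)$'' is false whenever $\Omega_r^-$ has diameter larger than $r$ (the generic case), since two such balls are then disjoint. It is also unnecessary: each $u\in C_\eps$ is a convex combination $\theta\bar v+(1-\theta)\Psi$ with a \emph{single} $\Psi=\psi_\eps(x_0)$, so a lower bound involving only that one $x_0$ suffices. A second, independent issue is that your pointwise lower bound $\kappa$ for $\Psi$ on a ball of fixed radius around $x_0$ is of order $R^{-(N-2)/2}$; since the paper chooses $R=R(\delta_0,\eps)$, $\kappa$ is not obviously uniform in $\eps$. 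Both problems disappear if you dichotomize on $\theta$. If $\theta\ge \tfrac12$, then $u^+\ge \tfrac12\bar v$, so
\[
\int_\Omega\int_\Omega \frac{|u^+(x)|^{\Peps}|u^+(y)|^{\Peps}}{|x-y|^\mu}\di x\di y \;\ge\; 2^{-2\Peps}\int_\Omega\int_\Omega\frac{|\bar v(x)|^{\Peps}|\bar v(y)|^{\Peps}}{|x-y|^\mu}\di x\di y,
\]
which is bounded below by a fixed positive constant for $\eps$ small since $\Peps$ ranges in a compact interval and $\bar v$ is fixed. If $\theta<\tfrac12$, then $u^+\ge\tfrac12\Psi$, and since $\Psi\in\mathcal{N}_\eps$ its Choquard integral equals $\|\Psi\|_\lambda^2$, which is bounded below by the constant of Lemma~\ref{condizioninehari}(ii); hence the Choquard integral of $u$ is at least $2^{-2\Peps}$ times that constant. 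This yields the required uniform lower bound without any pointwise estimate on $\Psi$ and makes the dependence on $R$ irrelevant.
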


\begin{proof}
By the definition of $\mathcal{N}_\eps$ we know that if $u\in C_\eps$ we have
\begin{equation}\label{ultima1}
I_\eps(t_\eps(u)u)=\frac{p_\eps-1}{2p_\eps}(t_\eps(u))^2 \|u\|_\lambda^2.
\end{equation}
Now, from the definition of $C_\eps$ and since $u$, $v\in H_0^1(\Omega)$, it follows that
\begin{equation}\label{ultima2}
\|u\|_{\mathcal{D}^{1,2}(\Omega)}\le \|v\|_{\mathcal{D}^{1,2}(\Omega)}+\|u_{R,x_0}\|_{\mathcal{D}^{1,2}(\Omega)}\le c
\end{equation}
and 
\begin{equation}\label{ultima3}
|u|_2^2\le |v|_2^2+|u_{R,x_0}|_2^2\le c.
\end{equation}
Moreover, since the domain $\Omega$ is bounded, we denote with diam $\Omega$ its diameter. For all $(x,y) \in \Omega \times\Omega$ we have $|x-y|\le 2 |\text{diam } \Omega|$, so 
\begin{equation}\label{ultima4}
\begin{aligned}
\int_\Omega\int_\Omega \frac{|u^+(x)|^{\Peps}|u^+(y)|^{\Peps}}{|x-y|^\mu}\di x\di y&\ge \frac{1}{(2|\text{diam }\Omega|)^{\mu}}\int_\Omega\int_\Omega |u^+(x)|^{\Peps}|u^+(y)|^{\Peps}\di x \di y\\
&=\frac{1}{(2|\text{diam }\Omega|)^{\mu}}|u^+|_{\Peps}^{2\Peps}.
\end{aligned}
\end{equation}
We claim that
\begin{equation}\label{Ultimiss}
\frac{1}{(2|\text{diam }\Omega|)^{\mu}}|u^+|_{\Peps}^{2\Peps}>0
\end{equation}
so that, recalling $\eqref{tepsilon}$, we obtain the boundedness of $t_\eps(u)$.
To see this, we note that for $\eps$ small enough, we find constants $c_1>0$, $c_2>0$ such that
\begin{equation}\label{ultima5}
|\bar{v}|^{\Peps}\ge |v|^{\Ds}-c_1>0\qquad |u_{R,x_0}|^{\Peps}\ge |U_1|^{\Ds}-c_2>0.
\end{equation}
Therefore, since $\bar{v}$ and $u_{R,x_0}$ are positive functions and $\theta \in [0,1]$, it results
\begin{equation}\label{ultima6}
|u^+|^{\Peps}\ge \max \{ \theta |\bar{v}|^{\Peps}|, (1-\theta)|u_{R,x_0}|^{\Peps}\} \ge \frac{1}{2}\min \{ |v|^{\Ds}-c_1, |U_1|^{\Ds}-c_2\}>0,
\end{equation}
and the claim is proved.
Putting together $\eqref{ultima1}$, $\eqref{ultima2}$, $\eqref{ultima3}$ and $\eqref{Ultimiss}$ we obtain the thesis.
\end{proof}
\appendix
\section{Proof of Theorem \ref{conccmptnes}}\label{sec7}
For the sake of completeness, we sketch in this appendix the proof of Theorem \ref{conccmptnes}. We follow Lemma $2.3$, Theorem $3.1$ and Lemma $3.3$ of \cite{ST}. 

First of all we recall that a $(PS)$-sequence for $I_*^\Omega$ is bounded. Hence, up to a subsequence, we may assume $v_n \rightharpoonup v$ in $\HO$ and $v$ solves Problem $\eqref{Prob}$. Then, if we consider $w_n=v_n-v$ we have $w_n\rightarrow 0$ in $L^2(\Omega)$ and, by Vitali's convergence
\begin{equation}\label{7C}
\begin{aligned}
\int_\Omega &(|\nabla(v_n)|^2-|\nabla w_n|^2)\di x=\int_\Omega \int_0^1 \frac{\di}{\di t}|\nabla v_n+(t-1)\nabla v|^2\di t \di x\\
&=2\int_0^1\int_\Omega (\nabla v_n+(t-1)\nabla v)\nabla v\di x\di t\rightarrow 2\int_0^1\int_\Omega t|\nabla v |^2\di x \di t\\
&=\int_\Omega |\nabla v|^2\di x
\end{aligned}
\end{equation}
as $n\rightarrow +\infty$. Similarly we obtain 
\begin{equation}\label{7CC}
\int_\Omega (|v_n|^2-|w_n|^2)\di x\rightarrow\int_\Omega |v|^2 \di x
\end{equation}
as $n \rightarrow+\infty$. Then, by \cite{GY}*{Lemma $2.2$} it holds
\begin{equation}\label{7D}
\begin{aligned}
\int_\Omega \int_\Omega\frac{|v_n(x)|^{\Ds}|v_n(y)|^{\Ds}}{|x-y|^\mu}\di x\di  y&=\int_\Omega \int_\Omega\frac{|w_n(x)|^{\Ds}|w_n(y)|^{\Ds}}{|x-y|^\mu}\di x\di y\\
&+\int_\Omega \int_\Omega\frac{|v(x)|^{\Ds}|v(y)|^{\Ds}}{|x-y|^\mu}\di x\di y+o(1)
\end{aligned}
\end{equation}
where $o(1)\rightarrow 0$ as $n\rightarrow +\infty$.

Therefore we obtain
\[
I_*^\Omega(v_n)=I_*^\Omega(v)+I_*^\Omega(w_n)+o(1)
\]
and in the same way
\[
(I_*^\Omega)'(v_n)=(I_*^\Omega)'(v)+(I_*^\Omega)'(w_n)+o(1)=(I_*^\Omega)'(w_n)+o(1)
\]
where $o(1)\rightarrow 0$ as $n\rightarrow +\infty$.

We need the following lemma:

\begin{lemma}\label{lemmaausiliario}\cite{ST}*{Lemma $3.3$}
Let $\{z_n\}_{n\in \N}$ be a $(PS)$-sequence for $I_*^\Omega$ in $\HO$ such that $z_n \rightharpoonup 0$. Then there exist two sequences $\{x_n\}_{n\in \mathbb{N}}\subset \Omega$ of points and $\{R_n\}_{n\in \mathbb{N}}$ of radii, $R_n \rightarrow +\infty$ as $n \rightarrow +\infty$, a non-trivial solution $z$ to the limit problem $\eqref{PLRN}$ and a $(PS)$-sequence $\{w_n\}_{n\in \N}$ for $I_*^\Omega$ in $\HO$ such that for a subsequence $\{z_n\}_{n\in \N}$ there holds
\[
z_n=w_n+R_n^{\frac{N-2}{2}}z(R_n(\cdot-x_n))+o(1)
\]
where $o(1) \rightarrow 0$ as $n\rightarrow +\infty$. In particular $w_n \rightharpoonup0 $ and
\[
I_*^\Omega(w_n)=I_*^\Omega(z_n)-I_*(z)+o(1).
\]
Moreover
\[
R_n\dist (x_n,\partial \Omega)\rightarrow \infty.
\]
Finally if $I_*^\Omega(z_n)\rightarrow m<m_*$, the sequence $\{z_n\}_{n\in \mathbb{N}}$ is relatively compact and hence $z_n\rightarrow 0$ in $H_0^{1}(\Omega)$, $I_*^\Omega(z_n)\rightarrow m=0$ as $n\rightarrow +\infty$.
\end{lemma}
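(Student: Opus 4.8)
The plan is to adapt the profile–decomposition argument of \cite{ST}, exploiting that $I_*$ is invariant under the rescaling $u\mapsto R^{\frac{N-2}{2}}u(R(\cdot-x))$. We may assume $z_n\not\to0$ in $\HO$, since otherwise there is no bubble to extract and, as $I_*^\Omega(0)=0$, the final assertion is immediate. Because $\{z_n\}$ is a bounded $(PS)$-sequence, from $(I_*^\Omega)'(z_n)[z_n]=o(1)$ we get
\[
\|z_n\|_\lambda^2=\int_\Omega\int_\Omega\frac{(z_n^+(x))^{\Ds}(z_n^+(y))^{\Ds}}{|x-y|^\mu}\di x\di y+o(1),
\]
so, along a subsequence, both sides are bounded away from $0$; together with $z_n\rightharpoonup0$ (hence $z_n\to0$ in $L^2(\Omega)$) this means the mass of $z_n$ concentrates at a vanishing length scale.

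First I would fix a sufficiently small $\delta_0>0$, depending only on $N$ and $\mu$, and choose radii $R_n>0$ and centres $x_n\in\Omega$ via the first level of the local Dirichlet energy, i.e.\ so that
\[
\max_{y\in\overline\Omega}\int_{B_{1/R_n}(y)\cap\Omega}|\nabla z_n|^2\di x=\int_{B_{1/R_n}(x_n)\cap\Omega}|\nabla z_n|^2\di x=\delta_0 .
\]
Since $z_n\rightharpoonup0$ and $\HO\hookrightarrow L^2(\Omega)$ is compact, $R_n$ cannot stay bounded (otherwise the local Dirichlet energies at a fixed scale would vanish in the limit, contradicting the lower bound on the global nonlinear term), hence $R_n\to+\infty$. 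Passing to the rescaled functions $\tilde z_n(x):=R_n^{-\frac{N-2}{2}}z_n(x_n+R_n^{-1}x)$, extended by $0$ outside $\Omega_n:=R_n(\Omega-x_n)$, these are bounded in $\Dud$, so $\tilde z_n\rightharpoonup z$ in $\Dud$ up to a subsequence. Using the rescaling invariance of $I_*$ and the $(PS)$ property of $\{z_n\}$, a test-function argument shows that $z$ is a nonnegative (by the $u^+$ structure, arguing as in Proposition \ref{problemiequivalenti}) solution of $\eqref{PLRN}$, posed either on all of $\setR$ or on a half-space, according to whether $R_n\dist(x_n,\partial\Omega)\to+\infty$ or stays bounded.

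The step I expect to be the main obstacle is proving $z\neq0$ and ruling out boundary concentration. Nontriviality follows from an $\eps$-regularity estimate adapted to the Hardy–Littlewood–Sobolev nonlinearity: testing the equation for $z_n$ with $\eta^2 z_n$ for a cut-off $\eta$ supported in a ball $B$ where $\int_B|\nabla z_n|^2\le\delta_0$, absorbing the lower-order terms using $z_n\to0$ in $L^2$, and bounding the localized nonlocal term by Proposition \ref{HardyLittS} and the Sobolev inequality, one obtains that such a local energy $e$ obeys $e\le C\,\delta_0^{\,\Ds-1}\,e+o(1)$; as $\Ds>1$, for $\delta_0$ small this forces $e\to0$ on every ball where $e\le\delta_0$, so the normalization $\int_{B_1(0)}|\nabla\tilde z_n|^2=\delta_0$ (valid once $x_n$ is translated to an almost-maximum point of the local energy) is incompatible with $\tilde z_n\rightharpoonup0$, giving $z\neq0$. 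For the boundary: if $R_n\dist(x_n,\partial\Omega)$ stayed bounded then, $\Omega$ being regular, $\Omega_n$ would converge to a half-space $\mathbb{H}$ and $z\in\mathcal{D}^{1,2}_0(\mathbb{H})\setminus\{0\}$ would solve the limit equation on $\mathbb{H}$ with zero trace on $\partial\mathbb{H}$, which a Pohozaev identity on $\mathbb{H}$ excludes. Hence $R_n\dist(x_n,\partial\Omega)\to+\infty$, $\Omega_n\to\setR$, and $z\in\Dud$ is a nontrivial solution of $\eqref{PLRN}$.

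Finally I would set $w_n:=z_n-R_n^{\frac{N-2}{2}}z(R_n(\cdot-x_n))$, which lies in $\HO$ up to an $o(1)$ error (since $R_n\dist(x_n,\partial\Omega)\to+\infty$ and $z\in\Dud$ has the decay of the extremals $U_{R,a}$, the bubble's tail outside $\Omega$ is negligible). Brezis–Lieb splitting for the Dirichlet and $L^2$ norms, together with the nonlocal Brezis–Lieb lemma \cite{GY}*{Lemma $2.2$} already used in this appendix, yields $I_*^\Omega(w_n)=I_*^\Omega(z_n)-I_*(z)+o(1)$ and $(I_*^\Omega)'(w_n)=(I_*^\Omega)'(z_n)+o(1)\to0$, so $\{w_n\}$ is again a $(PS)$-sequence, and $w_n\rightharpoonup0$ because $z_n\rightharpoonup0$ and the rescaled bubble converges weakly to $0$ in $\HO$. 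For the last assertion, suppose $I_*^\Omega(z_n)\to m<m_*$ and, for contradiction, $z_n\not\to0$ in $\HO$: the construction gives a nontrivial solution $z$ of $\eqref{PLRN}$, hence $z\in\mathcal{N}_*$ and $I_*(z)\ge m_*$, so $I_*^\Omega(w_n)\to m-I_*(z)\le m-m_*<0$; but every $(PS)$-sequence satisfies $I_*^\Omega(w_n)=\big(\tfrac12-\tfrac1{2\cdot\Ds}\big)\|w_n\|_\lambda^2+o(1)\ge o(1)$, a contradiction. Therefore $z_n\to0$ in $\HO$ and $I_*^\Omega(z_n)\to I_*^\Omega(0)=0$, i.e.\ $m=0$.
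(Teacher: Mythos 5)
Your argument follows the same Struwe-type profile decomposition that the paper uses: concentration centres $x_n$ and scales $R_n$ are selected from a local-energy concentration function, the rescaled sequence $\tilde z_n$ is shown to converge weakly to a nontrivial solution $z$ of the limit problem, boundary concentration is excluded so that $R_n\dist(x_n,\partial\Omega)\to\infty$, and the bubble is subtracted off (modulo a vanishing correction to remain in $H_0^1(\Omega)$) to produce the new $(PS)$-sequence $w_n$ with the stated energy splitting. The differences are only in presentation rather than strategy: you sketch an $\eps$-regularity estimate for $z\neq0$ and a Pohozaev identity on the half-space, where the paper instead invokes the adaptation of Siciliano's Lemma~3.3 and the nonexistence results of Gao--Yang and Esteban--Lions, and you supply an explicit energy-comparison argument for the final relative-compactness claim (if $m<m_*$ then $I_*^\Omega(w_n)\to m-I_*(z)<0$ contradicts $I_*^\Omega(w_n)\ge o(1)$), a detail the paper leaves implicit.
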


\begin{proof}
We begin noticing that, being $m_*^\Omega:=\inf_{\mathcal{N}_*^\Omega}I_*^\Omega$, we can suppose that $I_*^\Omega(z_n)\rightarrow m \ge m_*^\Omega=\Big(\dfrac{\Ds-1}{2\cdot \Ds}\Big)S_{H,L}^{\frac{\Ds}{\Ds-1}}$.

Then, being $(I_*^\Omega)'(z_n)\rightarrow 0$ we have
\[
\Big(\dfrac{\Ds-1}{2\cdot \Ds}\Big)\|z_n\|_\lambda^2=I_*^\Omega(z_n)-\frac{1}{2\cdot \Ds}\langle z_n, (I_*^\Omega)'(z_n)\rangle \rightarrow m
\]
and hence
\begin{equation}\label{7F}
\liminf_{n\rightarrow +\infty}\|z_n\|_\lambda^2=m \Big(\frac{2\cdot \Ds}{\Ds-1} \Big)\ge S_{H,L}^{\frac{\Ds}{\Ds-1}}.
\end{equation}
Let we call
\[
Q_n(r):=\sup_{x\in \Omega}\int_{B_r(x)}(|\nabla z_n|^2+\lambda|z_n|^2)\di x,
\]
choose $x \in \Omega$ and consider
\[
\tilde{z}_n:=R_n^{\frac{2-N}{2}}z_n\left(\frac{x}{R_n}+x_n\right).
\]
It results
\[
\tilde{Q}_n(1)=\sup_{\substack {x\in \setR\\ x/R_n+x_n \in \Omega}}\int_{B_1(x)}(|\nabla \tilde{z}_n|^2+\lambda|\tilde{z}_n|^2)\di x=\int_{B_1(0)}(|\nabla \tilde{z}_n|^2+\lambda|\tilde{z}_n|^2)\di x=\frac{1}{2L}S_{H,L}^{\frac{\Ds}{\Ds-1}}
\]
where $L\in \mathbb{N}$ is such that $B_2(0)$ is covered by $L$ balls of radius $1$. It is easy to see that, from $\eqref{7F}$, $R_n\ge R_0>0$ uniformly in $n$.
Now denote with $\tilde{\Omega}_n:=\left \{x\in \setR:\frac{x}{R_n}+x_n\in \Omega \right \}$ so that we can regard $\tilde{z}_n\in H_0^1(\tilde{\Omega}_n)\subset H^{1}(\setR)$.
It holds
\[
\|\tilde{z}_n\|_\lambda^2=\|z_n\|_\lambda^2\rightarrow m \Big(\frac{2\cdot \Ds}{\Ds -1} \Big)<\infty,
\]
so we can assume $\tilde{z}_n \rightharpoonup z$ in $H^{1}(\setR)$. 

Proceeding as in \cite{S}*{Lemma $3.3$}, replacing $\beta^*$ with $m_*^\Omega$ and $E_0$ with $I_*^\Omega$, we obtain that $\tilde{z}_n \rightarrow z$ in $H^1(\Omega')$ for any $\Omega' \subset\subset \setR$.
 
Now we distinguish two cases:
\begin{enumerate}
\item[$1)$]$R_n \dist (x_n, \partial \Omega)\le c <\infty$ uniformly.

In this case, less than a rotation, we may suppose that the sequence $\tilde{\Omega}_n$ exhausts 
\[
\tilde{\Omega}_\infty=\mathbb{R}_+^N=\{x=(x_1,\cdots, x_N); x_1>0 \}.
\]
\item[$2)$]$R_n \dist (x_n, \partial \Omega)\rightarrow \infty$ that implies $\tilde{\Omega}_n\rightarrow \tilde{\Omega}_\infty=\setR.$
\end{enumerate}
In both cases, for any $\varphi \in C_0^\infty(\tilde{\Omega}_\infty)$, we get $\varphi \in C_0^\infty(\tilde{\Omega}_n)$ for $n$ large, so
\[
\langle \varphi, (I_*^\Omega)'(z,\tilde{\Omega}_\infty)\rangle=\lim_{n\rightarrow \infty}\langle \varphi, (I_*^\Omega)'(\tilde{z}_n,\tilde{\Omega}_n)\rangle=0
\]
for all these $\varphi$. Hence $z\in H_0^{1}(\tilde{\Omega}_\infty)$ and weakly solves $\eqref{PLRN}$ on $\tilde{\Omega}_\infty$. But in the case $1)$ by \cite{GY}*{Theorem $1.5$} and \cite{EL}*{Theorem I.1.}, $z\equiv 0$. Therefore it has to be $R_n \dist (x_n, \partial \Omega)\rightarrow \infty$.

We conclude the proof letting $\varphi \in C_0^\infty(\setR)$ such that $0\le \varphi \le 1$, $\varphi \equiv 1$ in $B_1(0)$, $\varphi\equiv 0$ outside $B_2(0)$ and
\[
w_n(x)=z_n(x)-R_n^{\frac{N-2}{2}}z(R_n(x-x_n))\cdot \varphi (\bar{R}_n(x-x_n))\in H_0^{1}(\Omega),
\]
where $\{R_n \}_{n\in \N}$ is chosen such that $\tilde{R}_n:=R_n(\bar{R}_n)^{-1}\rightarrow \infty$ as $n \rightarrow +\infty$, i.e.\
\[
\tilde{w}_n=R_n^{\frac{2-N}{2}}w_n\Big(\frac{x}{R_n}+x_n\Big)=\tilde{z}_n(x)-z(x)\varphi\Big(\frac{x}{\tilde{R}_n}\Big).
\]
Then we may proceed as in \cite{ST}*{Proof of Lemma $3.3$} obtaining $\tilde{w}_n=\tilde{z}_n-z+o(1)$ with $o(1)\rightarrow 0$ as $n\rightarrow +\infty$ and using $\eqref{7C}$, $\eqref{7CC}$ and $\eqref{7D}$ we get
\[
I_*^\Omega(w_n)=I_*(\tilde{w}_n)=I_*(\tilde{z}_n)-I_*^\Omega(z)+o(1)
\]
and
\begin{equation}
\begin{aligned}
\|(I_*^\Omega)'(w_n,\Omega)\|_{H^{-1}}&=\|I_*'(\tilde{w}_n,\tilde{\Omega}_n)\|_{H^{-1}}\le \|I_*'(\tilde{z}_n,\tilde{\Omega}_n)\|_{H^{-1}}\\
&+\|(I_*^\Omega)'(z,\setR)\|_{H^{-1}}+o(1)=\|(I_*^\Omega)'(z_n,\Omega)\|_{H^{-1}}+o(1)\rightarrow 0
\end{aligned}
\end{equation}
as $n\rightarrow \infty$ and this conclude the proof.
\end{proof}

Applying Lemma \ref{lemmaausiliario} to sequence $z_n^1:=v_n-v$, $z_n^j:=~v_n-v-\sum_{i=1}^{j-1}{v_n^i}=z_n^{j-1}-v_n^{j-1}$ with $j>1$ and
\[
v_n^i(x):=(R_n^i)^{\frac{N-2}{2}}v^i(R_n^i(x-x_n^i)),
\]
by induction we get
\begin{equation}
\begin{aligned}
I_*^\Omega(z_n^i)&=I_*^\Omega(v_n)-I_*^\Omega(v)-\sum_{i=1}^{j-1}{I_*(v^i)}+o(1)\\
&\le I_*^\Omega(v_n)-(j-1)m^*+o(1).
\end{aligned}
\end{equation}
Note that for large $j$, the latter will be negative, so by Lemma \ref{lemmaausiliario} the induction will stop after some index $k>0$. For this index we have
\[
z_n^{k+1}=v_n-v-\sum_{j=1}^k{v_n^j}\rightarrow 0
\]
 and
\[
I_*^\Omega(v_n)-I_*^\Omega(v)-\sum_{j=1}^k{I_*(v^j)}\rightarrow 0,
\]
so we conclude the proof.

\begin{bibdiv}
\begin{biblist}

\bib{AGSY}{article}{
   author={Alves, Claudianor O.},
   author={Gao, Fashun},
   author={Squassina, Marco},
   author={Yang, Minbo},
   title={Singularly perturbed critical Choquard equations},
   journal={J. Differential Equations},
   volume={263},
   date={2017},
   number={7},
   pages={3943--3988},
}

\bib{c}{book}{
   author={Ambrosetti, Antonio},
   author={Malchiodi, Andrea},
   title={Nonlinear analysis and semilinear elliptic problems},
   series={Cambridge Studies in Advanced Mathematics},
   volume={104},
   publisher={Cambridge University Press, Cambridge},
   date={2007},
   pages={xii+316},
}
\bib{BaC}{article}{
   author={Bahri, A.},
   author={Coron, J.-M.},
   title={On a nonlinear elliptic equation involving the critical Sobolev
   exponent: the effect of the topology of the domain},
   journal={Comm. Pure Appl. Math.},
   volume={41},
   date={1988},
   number={3},
   pages={253--294},
}

\bib{BBM}{article}{
   author={Benci, Vieri},
   author={Bonanno, Claudio},
   author={Micheletti, Anna Maria},
   title={On the multiplicity of solutions of a nonlinear elliptic problem
   on Riemannian manifolds},
   journal={J. Funct. Anal.},
   volume={252},
   date={2007},
   number={2},
   pages={464--489},
}

\bib{BC}{article}{
   author={Benci, Vieri},
   author={Cerami, Giovanna},
   title={The effect of the domain topology on the number of positive
   solutions of nonlinear elliptic problems},
   journal={Arch. Rational Mech. Anal.},
   volume={114},
   date={1991},
   number={1},
   pages={79--93},
}
\bib{BCP}{article}{
   author={Benci, V.},
   author={Cerami, G.},
   author={Passaseo, D.},
   title={On the number of the positive solutions of some nonlinear elliptic
   problems},
   conference={
      title={Nonlinear analysis},
   },
   book={
      series={Sc. Norm. Super. di Pisa Quaderni},
      publisher={Scuola Norm. Sup., Pisa},
   },
   date={1991},
   pages={93--107},
}

\bib{CCS}{article}{
   author={Cingolani, Silvia},
   author={Clapp, M\'onica},
   author={Secchi, Simone},
   title={Multiple solutions to a magnetic nonlinear Choquard equation},
   journal={Z. Angew. Math. Phys.},
   volume={63},
   date={2012},
   number={2},
   pages={233--248},
}
\bib{CS}{article}{
   author={Clapp, M\'onica},
   author={Salazar, Dora},
   title={Positive and sign changing solutions to a nonlinear Choquard
   equation},
   journal={J. Math. Anal. Appl.},
   volume={407},
   date={2013},
   number={1},
   pages={1--15},
}

\bib{EL}{article}{
   author={Esteban, Maria J.},
   author={Lions, P.-L.},
   title={Existence and nonexistence results for semilinear elliptic
   problems in unbounded domains},
   journal={Proc. Roy. Soc. Edinburgh Sect. A},
   volume={93},
   date={1982/83},
   number={1-2},
   pages={1--14},
}

\bib{GY}{article}{
   author={Gao, Fashun},
   author={Yang, Minbo},
   title={On the Brezis-Nirenberg type critical
problem for nonlinear Choquard equation},
   journal={SCIENCE CHINA Mathematics},
   date={2016},
}

\bib{GMV}{article}{
   author={Ghimenti, Marco},
   author={Moroz, Vitaly},
   author={Van Schaftingen, Jean},
   title={Least action nodal solutions for the quadratic Choquard equation},
   journal={Proc. Amer. Math. Soc.},
   volume={145},
   date={2017},
   number={2},
   pages={737--747},
}
\bib{GV}{article}{
   author={Ghimenti, Marco},
   author={Van Schaftingen, Jean},
   title={Nodal solutions for the Choquard equation},
   journal={J. Funct. Anal.},
   volume={271},
   date={2016},
   number={1},
   pages={107--135},
}

\bib{L}{article}{
   author={Lenzmann, Enno},
   title={Uniqueness of ground states for pseudorelativistic Hartree
   equations},
   journal={Anal. PDE},
   volume={2},
   date={2009},
   number={1},
   pages={1--27},
}	
\bib{Lieb}{article}{
   author={Lieb, Elliott H.},
   title={Existence and uniqueness of the minimizing solution of Choquard's
   nonlinear equation},
   journal={Studies in Appl. Math.},
   volume={57},
   date={1976/77},
   number={2},
   pages={93--105},
}
\bib{Lions}{article}{
   author={Lions, P.-L.},
   title={The Choquard equation and related questions},
   journal={Nonlinear Anal.},
   volume={4},
   date={1980},
   number={6},
   pages={1063--1072},
}
\bib{LL}{book}{
   author={Lieb, Elliott H.},
   author={Loss, Michael},
   title={Analysis},
   series={Graduate Studies in Mathematics},
   volume={14},
   edition={2},
   publisher={American Mathematical Society, Providence, RI},
   date={2001},
   pages={xxii+346},
}

\bib{MV}{article}{
   author={Moroz, Vitaly},
   author={Van Schaftingen, Jean},
   title={Groundstates of nonlinear Choquard equations: existence,
   qualitative properties and decay asymptotics},
   journal={J. Funct. Anal.},
   volume={265},
   date={2013},
   number={2},
   pages={153--184},
}
\bib{MV'}{article}{
   author={Moroz, Vitaly},
   author={Van Schaftingen, Jean},
   title={Groundstates of nonlinear Choquard equations:
   Hardy-Littlewood-Sobolev critical exponent},
   journal={Commun. Contemp. Math.},
   volume={17},
   date={2015},
   number={5},
   pages={12},
}	
\bib{MZ}{article}{
   author={Ma, Li},
   author={Zhao, Lin},
   title={Classification of positive solitary solutions of the nonlinear
   Choquard equation},
   journal={Arch. Ration. Mech. Anal.},
   volume={195},
   date={2010},
   number={2},
   pages={455--467},
}

\bib{S}{article}{
   author={Siciliano, Gaetano},
   title={Multiple positive solutions for a Schr\"odinger-Poisson-Slater
   system},
   journal={J. Math. Anal. Appl.},
   volume={365},
   date={2010},
   number={1},
   pages={288--299},
}
\bib{ST}{book}{
   author={Struwe, Michael},
   title={Variational methods},
   series={Ergebnisse der Mathematik und ihrer Grenzgebiete. 3. Folge. A
   Series of Modern Surveys in Mathematics [Results in Mathematics and
   Related Areas. 3rd Series. A Series of Modern Surveys in Mathematics]},
   volume={34},
   edition={4},
   note={Applications to nonlinear partial differential equations and
   Hamiltonian systems},
   publisher={Springer-Verlag, Berlin},
   date={2008},
   pages={xx+302},
}

\bib{WW}{article}{
   author={Wei, Juncheng},
   author={Winter, Matthias},
   title={Strongly interacting bumps for the Schr\"odinger-Newton equations},
   journal={J. Math. Phys.},
   volume={50},
   date={2009},
   number={1},
   pages={22},
}
\end{biblist}
\end{bibdiv}
\end{document}